\def\Q{{\mathbb{Q}}}
\def\R{{\mathbb{R}}}
\def\Z{{\mathbb{Z}}}
\def\C{{\mathbb{C}}}
\def\Frob{\mathrm{Frob}}
\def\Gal{\mathrm{Gal}}
\def\End{\mathrm{End}}
\def\Hom{\mathrm{Hom}}
\def\deg{\mathrm {deg}}
\def\dim{\mathrm {dim}}
\def\ker{\mathrm {ker}}
\def\fchar{\mathrm{char}}
\def\F{{\mathbb{F}}}
\def\M{{\mathcal{M}}}
\def\N{{\mathcal{N}}}
\def\O{{\mathcal{O}}}
\def\X{{{A}}}
\def\Y{{{B}}}
\def\p{{\mathfrak p}}
\def\bmu{{\boldsymbol \mu}}
\def\bbz{{\mathbb{Z}}}
\def\bbq{{\mathbb{Q}}}
\def\isom{\xrightarrow{\sim}}
\newtheorem{thm}{Theorem}[section]
\newtheorem{lem}[thm]{Lemma}
\newtheorem{cor}[thm]{Corollary}
\newtheorem{prop}[thm]{Proposition}
\theoremstyle{definition}
\newtheorem{defn}[thm]{Definition}
\newtheorem{ex}[thm]{Example}
\newtheorem{rem}[thm]{Remark}
\title[Isogenies of abelian varieties over finite fields]
{Isogenies of abelian varieties over finite fields}
\author[A.\ Silverberg]{A.\ Silverberg}
\address{Department of Mathematics, University of California, Irvine, CA 92697-3875, USA}
\email{asilverb@math.uci.edu}
\thanks{Silverberg was partially supported by the National 
Science Foundation.}
\author[Yu.\ G.\ Zarhin]{Yu.\ G.\ Zarhin}
\address{Department of Mathematics, Pennsylvania State University, 
University Park, PA 16802, USA}
\email{zarhin@math.psu.edu}
\thanks{Zarhin was partially supported by the Simons Foundation (grant \#246625 to Yuri Zarkhin).
}
\dedicatory{This paper is dedicated to the memory of Scott Vanstone.}
\subjclass[2010]{11G10 (primary), 14K02, 14G15 (secondary)}
\keywords{abelian varieties; isogenies; finite fields; fields of definition}
\begin{document}

\maketitle

\section{Introduction}
\label{intro}

In this paper we give conditions
under which two abelian varieties that are defined over a finite
field $F$, and are isogenous over some larger field, are 
$F$-isogenous. Further, 
we give conditions under which a given isogeny is defined over $F$.  

If $\X$ and $\Y$ are abelian varieties defined over a field $F$, and $n$
is an integer not divisible by the characteristic of $F$ and greater
than $2$, then every homomorphism between $\X$ and $\Y$ is defined over
$F(\X_n,\Y_n)$, the smallest extension of $F$ over which the $n$-torsion
points on $\X$ and $\Y$ are defined (see Theorem 2.4 of \cite{silverberg}). 
In this paper (and earlier in
\cite{silzar}) we consider the situation where we do not have full level $n$
structure, and give conditions under which isogenies between rigidified
polarized abelian varieties, with level structure given by maximal isotropic
subgroups of $n$-torsion points, are defined over fields of definition
for the rigidified polarized abelian varieties. 
Our main results are Theorems \ref{equiv}, \ref{skol}, and \ref{mprime} and Corollary \ref{mprimecor} below.

We emphasize that our assumptions
do not require that all the points on the given maximal isotropic subgroups
be defined over $F$, but only that the subgroups be defined over $F$ and
that the restrictions of the isogeny to the subgroups be an isomorphism
defined over $F$.

The results of this paper give improvements in the case of abelian varieties
over finite fields, on results in \cite{silzar} for abelian varieties over
arbitrary fields. For example, Theorem \ref{equiv} below does not hold 
in characteristic zero, even for elliptic curves.
The results in this paper rely on the theory of abelian varieties 
over finite fields (see \cite{tate}),
and our results in \cite{silzar} and \cite{serrelem}.

Abelian varieties over finite fields are useful in cryptography,
especially in the case of elliptic curves and Jacobians of
curves of genus two.
Isogenies and torsion points arise, for example,
when considering the so-called ``distortion maps'' in pairing-based cryptography, 
or when reducing the discrete log problem
on one abelian variety to the discrete log problem on an isogenous one 
(see for example \cite{Verheul}, \cite{galbraithetal},
Chapter 24 of \cite{CohenFrey}, Chapter 5 of \cite{HPS}).
These are settings where it can be important to know whether an isogeny is 
defined over the ground field,
or is defined over an extension over which a given torsion point or subgroup
is defined.

Next, we collect together hypotheses that are common to all our main results,
and refer to them collectively
as $\Phi (n)$.
(See \S\ref{notation} for notation and definitions.)

\begin{defn}
\label{Phindef}
The condition $\Phi (n)$ will mean 
that the following situation (a - g) holds:
\begin{itemize}
\item [\rm{(a)}] $\X$ and $\Y$ are positive dimensional abelian varieties 
defined over a finite field $F$,
\item [\rm{(b)}] $n$ is a positive integer not divisible 
by the characteristic of $F$,
\item [\rm{(c)}] $f\colon \X\to \Y$ is an isogeny of degree prime to $n$
that is defined over a finite extension $L$ of $F$ of degree $m$,
\item [\rm{(d)}] $\mu$ is a polarization on $\Y$, defined over $F$,
\item [\rm{(e)}] $\widetilde{\Y}_n$ is a subgroup of $\Y_n$, defined over $F$, and 
containing a maximal isotropic subgroup of $\Y_n$ with respect to the 
$e_n$-pairing induced by $\mu$,
\item [\rm{(f)}] $\widetilde{\X}_n$ is a subgroup of $\X_n$, defined over $F$, such 
that the restriction of $f$ to $\widetilde{\X}_n$ is an isomorphism from 
$\widetilde{\X}_n$ onto $\widetilde{\Y}_n$ defined over $F$,
\item [\rm{(g)}] $\lambda$ is the polarization on $\X$ defined by 
$\lambda = {}^tf\mu f$, and $\lambda$ is defined over $F$.
\end{itemize}
\end{defn}
(The set $\widetilde{\X}_n$ (resp., $\widetilde{\Y}_n$)
is a Galois submodule of $\X_n$ (resp., ${\Y}_n$),
and the restriction of $f$ to $\widetilde{\X}_n$
is an isomorphism of Galois modules
$f : \widetilde{\X}_n \isom \widetilde{\Y}_n$.)

The collection of hypotheses $\Phi (n)$ differs from the collection
$I(n)$ of our earlier work \cite{silzar} in that it contains the additional
assumptions that the field $F$ is finite and the polarizations
$\mu$ and $\lambda$ are defined over $F$. Also, the degree of $L$
over $F$ is now given the label $m$.

A special case of our results is that if $\Phi (n)$ holds for some
$n \ge 5$ with $m \le 3$, and if $\X$ (and thus $\Y$) is $L$-simple,
then $\X$ and $\Y$ are $F$-isogenous (see Corollary \ref{mprimecor}).

Additional notation is defined in \S\ref{notation} below.  
In \S\ref{prelims} we give some lemmas 
about abelian varieties over finite fields that enable us
to prove our main results in \S\ref{isogsect}.
In \S\ref{sknsect} we apply the results of \S\ref{isogsect} and the 
Skolem-Noether Theorem to obtain additional information.

We thank the referees for helpful comments.

\section{Notation and definitions}
\label{notation}

If $F$ is a field, then $\bar F$ denotes an algebraic closure of $F$, 
and $F^s \subseteq \bar{F}$ denotes a separable closure of $F$. 
A set $C$ 
(respectively, a
map $g$) that is defined over $F^s$ is defined over $F$ if and only if 
$\sigma(C) = C$ (respectively, $g^\sigma = g$) for all $\sigma\in\Gal(F^s/F)$,
where $g^\sigma(x):=\sigma(g(\sigma^{-1}(x)))$
(see p.~76 and p.~186 of \cite{weilfound}).
If $\X$ is an abelian variety defined over $F$, write $\X_n$ for 
the kernel of multiplication by $n$ in $\X({\bar F})$, write $\End(\X)$ for 
the ring of (${\bar F}$-)endomorphisms of $\X$, and let 
$\End^0(\X) = \End (\X)\otimes_{\bbz}\bbq$. 
If $n$ is not divisible by the characteristic $\fchar(F)$, then 
$\X_n$ is a free $\bbz /n\bbz$-module of rank $2 \dim (\X)$, and 
$\X_n \subseteq \X(F^s)$ (see Chapter II of \cite{MumfordAV}). 
Let $\End_F(\X)$ 
denote the ring of endomorphisms of $\X$ defined over $F$. Let 
$\End^0_F(\X) = \End_F(\X)\otimes_{\bbz}\bbq$. 

\begin{defn}
\label{ZFAdef}
We denote by $Z_F(\X)$ the center of 
the semi-simple $\bbq$-algebra $\End^0_F(\X)$.  
\end{defn}

If $\alpha$ is a homomorphism between abelian varieties that are defined 
over $F$, then $\alpha$ is defined over a finite separable extension 
of $F$ (see Corollary 1, p.~258 of \cite{chow}). If $\alpha \in \Hom (\X,\Y)$
is an isogeny, then $\alpha'$ denotes the unique element 
$\beta$ of $\Hom(\Y,\X)\otimes_{\bbz}\bbq$ such that 
$\alpha \beta = 1$ and $\beta\alpha = 1$.  
If $\X^\ast $ and $\Y^\ast $ are respectively the Picard varieties of abelian
varieties $\X$ and $\Y$, and $\alpha \in \Hom(\X,\Y)\otimes_{\bbz}\bbq$, then 
$^{t}\alpha \in \Hom(\Y^\ast ,\X^\ast )\otimes_{\bbz}\bbq$ denotes the 
transpose of $\alpha$ (see p.~124 of \cite{lang} or p.~3 of \cite{shimtan}).  
Polarizations on $\X$ will be viewed as isogenies from $\X$ onto $\X^\ast $.

If $\X$ is an abelian variety defined over a field $F$, $\lambda$ is a polarization
on $\X$, $n$ is a positive integer not divisible by $\fchar(F)$, 
and $\bmu _n$ is the $\Gal (F^s/F)$-module
of $n^{\rm th}$ roots of unity in $F^s$, then the $e_n$-pairing 
induced by 
the polarization $\lambda$ is a skew-symmetric bilinear map 
$e_{\lambda ,n} : \X_n \times \X_n \to {\bmu}_n$
(see \S 75 of \cite{weilvar} for a definition) that satisfies:
\begin{enumerate}
\item [\rm{(i)}] 
$\sigma (e_{\lambda ,n}(x_1,x_2)) = 
e_{\sigma (\lambda ),n}(\sigma (x_1),\sigma (x_2))$ if 
$\sigma \in \Gal (F^s/F)$ and $x_1$, $x_2 \in \X_n$,
\item [\rm{(ii)}] if $f : \X \to \Y$ is a homomorphism of abelian 
varieties, $\lambda$
and $\mu$ are polarizations on $\X$ and $\Y$, respectively, $\lambda =
{}^tf\mu f$, and $x_1, x_2 \in \X_n$, 
then 
$$e_{\mu ,n}(f(x_1),f(x_2)) = e_{\lambda ,n}(x_1,x_2),$$ 
\item [\rm{(iii)}] if $n$ is relatively prime to the degree of $\lambda$, 
then the pairing 
$e_{\lambda ,n}$ is non-degenerate.
\end{enumerate}

If $\X$ is an abelian variety defined over a finite field $F$, 
let $\pi_{F,\X} \in \End_F(\X)$
denote the Frobenius endomorphism of $\X$ relative to 
$F$. Then $\pi_{F,\X}$ is  invertible in $\End^0_F(\X)$.
When we are under the hypothesis $\Phi (n)$, so that an isogeny $f$ and
a finite field $L$ are given, we will let $\varphi=\varphi_f$ denote the isomorphism
\begin{equation}
\label{varphidef}
\varphi=\varphi_f\colon \End^0_L(\Y)\, \tilde{\rightarrow}\, \End^0_L(\X)
\end{equation}
defined by $\varphi(u)=f^{-1}uf$. 

\begin{defn}
An {\em $F$-isotypic abelian variety} is a positive dimensional abelian variety defined
over a field $F$ that is $F$-isogenous to a power of an $F$-simple
abelian variety.
\end{defn}

\begin{defn}
If $\X$ is an abelian variety over a field $F$, an {\em $F$-isotypic component} of
$\X$ is a maximal $F$-isotypic abelian subvariety of $\X$.  
\end{defn}

\begin{ex}
We give an example of a quadratic extension $L/F$ and an 
$F$-isotypic abelian variety $A$ such that $A$ is not $L$-isotypic\footnote{We 
have just learned about similar (but different) counterexamples 
with $F=\R$ and $\Q$ in a recent book of C-L.~Chai, B.~Conrad and F.~Oort \cite[Example 1.2.6 on pp.~22--23 and Example 1.6.4 on pp.~74--75]{CCO}.}
(contrary to Claim 10.8 of \cite[p.~146]{Oort}).
Let $L/F$ be a quadratic extension of number fields and 
let $\sigma$ be the non-trivial automorphism of $L/F$.  
Let $\O_F$ (resp., $\O_L$) denote the ring of integers in $F$ (resp., $L$).
Choose a maximal ideal $\p$ of $\O_F$ that splits into a product $\p_1 \p_2$ of two {\em distinct}
maximal ideals in $\O_L$ (the existence of such $\p$ is guaranteed by 
the Chebotarev density theorem); we have
$\sigma(\p_1)=\p_2$ and $\sigma(\p_2)=\p_1.$
Choose $\mathbf{j}\in L$  to be a $\p_1$-adic integer and not a $\p_2$-adic integer (in particular, $\mathbf{j}\not\in \O_L$)  and take an elliptic curve $E$ over $L$ whose  $j$-invariant $j(E)$ is $\mathbf{j}$.  This means  that $E$ has potentially good reduction at $\p_1$ and potentially multiplicative
reduction at $\p_2$ (see \cite[Ch. VII,  Sect. 5, Prop. 5.5]{Silverman}).
Consider the elliptic curve
$$E':=\sigma(E)=E \times_{L,\sigma}L$$
over $L$ that is the conjugate of $E$ over $F$.
Clearly, the $j$-invariant $j(E')$ of $E'$ 
is $\sigma(j(E))=\sigma(\mathbf{j})$. 
In particular,  $j(E')$ is a $\p_2$-adic integer and not a $\p_1$-adic integer.
This means that $E'$ has potentially multiplicative reduction at $\p_1$ and 
potentially good reduction at $\p_2$.
This implies that
$E$ is not isogenous to $E'$ over any extension of $L$.  
Let $A$ be the Weil restriction $R_{L/F}(E)$ of $E$ \cite[Sect.~1.3]{WeilAdeles}, which is 
an abelian surface over $F$ that is
 isomorphic over $L$
 to $E \times E'$.  In particular, $A$ is {\sl not} $L$-isotypic.
However, $A$ is a {\em simple} abelian variety over $F$.
Indeed, if $A$ were not $F$-simple, then it would contain an elliptic curve
(one-dimensional abelian subvariety)
 $E_0$ over $F$  and therefore
$$0 \ne \Hom_L(E_0,A)=\Hom_L(E_0,E\times E')=\Hom_L(E_0,E)\oplus \Hom_L(E_0,E').$$
This implies that either $\Hom_L(E_0,E) \ne 0$ or $\Hom_L(E_0,E') \ne 0$. Since $E_0$ is defined over $F$ while $E$ and $E'$ are conjugate over $F$, we conclude that  both groups $\Hom_L(E_0,E) $ and $\Hom_L(E_0,E')$ are not zero. This implies that
 $E$ and  $E'$ are isogenous over $L$, which is a contradiction. 
 It follows that $A$ is $F$-simple but not $L$-isotypic.
\end{ex}

We write $\zeta_r$ for a primitive $r$-th root of unity.

\section{Lemmas}  
\label{prelims}

The results in this section will be used in later sections to prove our
main results.

\begin{lem}
\label{newiso2}
If $F$ is a field and $A$ and $B$ are $F$-isotypic abelian varieties, then the following conditions are equivalent:
\begin{enumerate}
 \item[\rm{(i)}] $A^{\dim(B)}$ is $F$-isogenous to $B^{\dim(A)}$,
\item[\rm{(ii)}] $\Hom_F(A,B) \ne 0$,
\item[\rm{(iii)}] $\Hom_F(B,A) \ne 0$,
\item[\rm{(iv)}] $A\times B$ is $F$-isotypic.
\end{enumerate}
\end{lem}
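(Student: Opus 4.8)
The plan is to prove the cycle of implications $(i) \Rightarrow (ii) \Rightarrow (iv) \Rightarrow (iii) \Rightarrow (ii)$ (or a similar closed loop), using the structure theory of isotypic abelian varieties together with Poincar\'e reducibility over $F$. Recall that an $F$-isotypic abelian variety $A$ is $F$-isogenous to $S_A^{a}$ for an $F$-simple $S_A$, and that $\End^0_F(A)$ is then a matrix algebra over the division algebra $\End^0_F(S_A)$; in particular $\dim(A) = a\dim(S_A)$, and $\Hom_F(A,B) \ne 0$ if and only if $S_A$ is $F$-isogenous to $S_B$, by Schur's lemma (any nonzero homomorphism between powers of $F$-simple varieties induces a nonzero homomorphism between the simple factors, which must then be an $F$-isogeny).

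\textbf{Proof strategy.} First I would establish $(ii) \Leftrightarrow (iii)$: a nonzero $g \in \Hom_F(A,B)$ restricts to a nonzero map on some $F$-simple factor of $A$, forcing $S_A \sim_F S_B$; this condition is symmetric, so it equally yields $\Hom_F(B,A) \ne 0$ by the same argument run backwards (or by taking the dual isogeny up to a nonzero multiple). Next, $(ii) \Rightarrow (iv)$: if $S_A \sim_F S_B =: S$, then $A \times B \sim_F S^a \times S^b = S^{a+b}$, which is visibly $F$-isotypic. For $(iv) \Rightarrow (ii)$: if $A \times B$ is $F$-isotypic with simple factor $S$, then the $F$-simple factors of $A$ and of $B$ are each $F$-isogenous to $S$ (an $F$-simple abelian subvariety of an $F$-isotypic variety with simple factor $S$ must be $F$-isogenous to $S$), hence $S_A \sim_F S_B$ and $\Hom_F(A,B) \ne 0$. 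Finally $(i) \Rightarrow (ii)$: from $A^{\dim B} \sim_F B^{\dim A}$ we get $S_A^{a\dim B} \sim_F S_B^{b \dim A}$; comparing $F$-simple factors (using that $F$-isogeny type decomposes uniquely into $F$-isogeny classes of simple varieties) forces $S_A \sim_F S_B$, so $\Hom_F(A,B)\ne 0$. For $(ii) \Rightarrow (i)$: with $S_A \sim_F S_B = S$, $a = \dim(A)/\dim(S)$ and $b = \dim(B)/\dim(S)$, so $A^{\dim B} \sim_F S^{a \dim B} = S^{(\dim A \dim B)/\dim S} \sim_F B^{\dim A}$, since the exponents match.

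\textbf{Main obstacle.} The crux is the bookkeeping in $(i) \Leftrightarrow (ii)$: one must verify that the exponents $a\dim(B)$ and $b\dim(A)$ genuinely coincide (they do, both equal $\dim(A)\dim(B)/\dim(S)$, once $S_A \sim_F S_B$), and that an $F$-isogeny between powers of $F$-simple varieties forces equality of exponents when the simple varieties are $F$-isogenous. This last point rests on the fact that $\Hom_F(S^r, S^s)$, as a module, has rank proportional to $rs$, so an isomorphism in the isogeny category forces $r = s$ — equivalently, $\dim$ is an isogeny invariant. None of this is deep, but it is the step most prone to an off-by-a-factor slip, so I would state it carefully, perhaps isolating the observation that for $F$-isotypic $A$ with $F$-simple factor $S_A$ one has the well-defined multiplicity $\dim(A)/\dim(S_A)$, and that two $F$-isotypic varieties are $F$-isogenous iff they have $F$-isogenous simple factors and equal multiplicity.
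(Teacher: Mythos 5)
Your proposal is correct and follows essentially the same route as the paper: the paper's (very brief) proof simply writes $A \sim_F A_0^a$, $B \sim_F B_0^b$ with $A_0, B_0$ $F$-simple and observes that each of (i)--(iv) is equivalent to $A_0$ being $F$-isogenous to $B_0$, which is exactly the equivalence you verify (including the exponent count $a\dim B = b\dim A = \dim A \dim B/\dim S$ for (i)). Your write-up just supplies the details the paper leaves as "clearly."
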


\begin{proof}
There are $F$-simple abelian varieties $A_0$ and $B_0$ over $F$ such that $A$ is $F$-isogenous to a power of $A_0$ and $B$ is isogenous to a power of $B_0$.  Clearly, each of the above conditions is equivalent to the assertion that $A_0$ and $B_0$ are $F$-isogenous.
\end{proof}

\begin{defn}
If $A$ is a positive dimensional abelian variety over $F$, let $I_A(F)$ 
denote the set of all $F$-isotypic components of $A$. 
\end{defn}

\begin{lem}
\label{newiso}
\label{isotyprem}
Suppose $A$ is a positive dimensional abelian variety over a field $F$.
Then $Z_F(A)$ is a number field if and only if $A$ is $F$-isotypic.
\end{lem}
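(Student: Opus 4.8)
The statement to prove is Lemma \ref{isotyprem}: for a positive dimensional abelian variety $A$ over a field $F$, the center $Z_F(A)$ of $\End^0_F(A)$ is a number field if and only if $A$ is $F$-isotypic. The natural approach is to decompose $A$ up to $F$-isogeny into isotypic pieces and track what happens to the center. By the Poincar\'e reducibility theorem, $A$ is $F$-isogenous to a product $\prod_{i=1}^r A_i^{e_i}$ with the $A_i$ pairwise non-$F$-isogenous $F$-simple abelian varieties and $e_i \ge 1$; the isotypic components of $A$ correspond to the factors $A_i^{e_i}$. Since $\End^0_F(-)$ is a $F$-isogeny invariant and sends products to products (there are no nonzero homomorphisms between non-$F$-isogenous simple factors), we get
\[
\End^0_F(A) \;\cong\; \prod_{i=1}^r M_{e_i}\bigl(D_i\bigr), \qquad D_i := \End^0_F(A_i),
\]
where each $D_i$ is a division algebra over $\Q$ with center a number field $K_i$. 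Hence the center is
\[
Z_F(A) \;\cong\; \prod_{i=1}^r K_i,
\]
a product of $r$ number fields.

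\smallskip

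\textbf{Finishing the argument.} From the displayed description, $Z_F(A)$ is a number field (i.e.\ a field, and automatically a finite extension of $\Q$ since each $K_i$ is) precisely when $r = 1$, i.e.\ when $A$ is $F$-isogenous to a power of a single $F$-simple abelian variety, which is exactly the definition of $A$ being $F$-isotypic. For the forward direction one notes that if $r \ge 2$ then $Z_F(A) \cong \prod K_i$ has nontrivial idempotents and is therefore not a field; for the converse, if $A$ is $F$-isotypic then $r=1$ and $Z_F(A) \cong K_1$ is a number field. One small point to address: the definition of $Z_F(A)$ presupposes $\End^0_F(A)$ is semisimple, which is standard (it is a product of matrix algebras over division algebras as above), so this is not an extra hypothesis.

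\smallskip

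\textbf{Main obstacle.} There is no deep obstacle here; the lemma is essentially a formal consequence of Poincar\'e reducibility together with the structure theory of $\End^0_F$ of a simple abelian variety (a division algebra finite-dimensional over $\Q$, hence with number-field center). The only things requiring a line of care are: (i) justifying that $\End^0_F$ of the isogeny decomposition splits as the stated product, which follows because $\Hom_F(A_i, A_j) = 0$ for $i \ne j$ when $A_i, A_j$ are $F$-simple and non-$F$-isogenous; and (ii) matching the combinatorial statement ``$r=1$'' with the stated definitions of $F$-isotypic and of $F$-isotypic component. Both are routine, so the proof should be short.
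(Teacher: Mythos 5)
Your proposal is correct and follows essentially the same route as the paper: decompose $A$ up to $F$-isogeny via Poincar\'e reducibility, use that $\Hom_F$ vanishes between non-isogenous $F$-simple factors so that $\End^0_F(A)$ splits as a product of matrix algebras over division algebras, and read off that the center is a product of number fields, hence a field exactly when there is one isotypic factor. The paper phrases the decomposition in terms of the set of $F$-isotypic components and the summation isogeny, but this is the same argument in slightly different packaging.
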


\begin{proof}
It is well known that if $X$ is a simple positive dimensional abelian variety 
over $F$ then $\End_F^0(X)$ is a finite-dimensional division algebra over $\Q$ and therefore its center $Z_F(X)$ is a number field. 
If $A$ is $F$-isotypic, then $A$ is $F$-isogenous to $X^n$ for some
positive dimensional $F$-simple abelian variety $X$ and some $n\in\Z_{>0}$. 
A choice of the $F$-isogeny gives rise to an algebra isomorphism between
$\End_F^0(A)$ and the matrix algebra of size $n$ over $\End_F^0(X)$. This implies that the centers of $\End_F^0(X)$ and $\End_F^0(A)$ are {\em canonically} isomorphic \cite[pp. 189--190]{ZarhinLuminy}. In particular, the center $Z_F(A)$ of $\End_F^0(A)$ is also a number field.

It follows easily from the Poincar\'e complete reducibility theorem that the set  $I_A(F)$ is finite and the natural $F$-homomorphism of abelian varieties
$$S: \prod_{X \in I_A(F)} X \to A, \ \{a_X\}_{X \in I_A(F)} \mapsto \sum_{X \in I_A(F)} a_X$$
is an $F$-isogeny.  Since $\Hom_F(X,Y)=0$ when 
$X, Y \in I_A(F)$ and $X \ne Y$, we have
$$\End_F^0(\prod_{X \in I_A(F)} X)=\bigoplus_{X \in I_A(F)} \End_F^0(X).$$
Now the isogeny $S$ induces (as in \eqref{varphidef}) an isomorphism 
$$\End_F^0(A)\cong\End_F^0(\prod_{X \in I_A(F)} X)= \bigoplus_{X \in I_A(F)} \End_F^0(X).$$
This implies that 
$$Z_F(A)\cong Z_F(\prod_{X \in I_A(F)} X)=\bigoplus_{X \in I_A(F)}Z_F(X).$$
{\em It follows that $Z_F(A)$ is a number field if and only if $\#I_A(F)=1$, 
i.e., if and only if $A$ is $F$-isotypic.}
\end{proof}

Suppose that $\X$ is an abelian variety
of positive dimension over a finite field $F$.  By
Theorem~2a on p.~140 of \cite{tate}, we have 
$Z_F(\X) = \bbq[\pi_{F,\X}]$.
Suppose 
that $L$ is a field extension of $F$ of finite degree $m$.  Since 
$\pi^m_{F,\X} = \pi_{L,\X}$, 
we have 
\begin{equation}
\label{ZFLX}
Z_F(\X) = \bbq[\pi_{F,\X}]  \supseteq   \bbq[\pi_{L,\X}] = Z_L(\X).
\end{equation}

\begin{prop}
 \label{isoFL}
Suppose $F$ and $L$ are finite fields, and $L$ is an extension of $F$.
If $A$ is an $F$-isotypic abelian variety, 
then $A$ is $L$-isotypic.\footnote{After this paper appeared on the arXiv and was submitted, we learned that this proposition was proved in 
\cite[Prop. 1.2.6.1 on p.~23]{CCO}. The proof there is completely different from ours.}
\end{prop}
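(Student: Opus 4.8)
The plan is to reduce to the $F$-simple case and then use the Honda--Tate description of endomorphism algebras over finite fields via Frobenius. By Lemma~\ref{newiso} it suffices to show that $Z_L(A)$ is a number field; so assume for contradiction that $A$ is $F$-isotypic but not $L$-isotypic. Using the isogeny-invariance of the center recorded in the proof of Lemma~\ref{newiso}, I may replace $A$ by an $F$-isogenous abelian variety, so without loss of generality $A = X^r$ for an $F$-simple abelian variety $X$; since the center only depends on $X$ (the matrix-algebra argument of Lemma~\ref{newiso}), it is enough to treat the case $A = X$ with $X$ $F$-simple. Thus the claim becomes: if $X$ is $F$-simple, then $X$ is $L$-isotypic, i.e.\ $Z_L(X)$ is a number field.

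Now I would bring in the finite-field structure. By Theorem~2a of \cite{tate} (quoted just before the proposition), $Z_F(X) = \bbq[\pi_{F,X}]$ and $Z_L(X) = \bbq[\pi_{L,X}]$, and since $\pi_{L,X} = \pi_{F,X}^m$ we have the inclusion \eqref{ZFLX}: $Z_L(X) = \bbq[\pi_{F,X}^m] \subseteq \bbq[\pi_{F,X}] = Z_F(X)$. Because $X$ is $F$-simple, $Z_F(X) = \bbq[\pi_{F,X}]$ is a number field (Lemma~\ref{newiso}), hence so is its subfield $\bbq[\pi_{F,X}^m]$. Therefore $Z_L(X)$ is already a number field, so by Lemma~\ref{newiso} (applied over $L$) the abelian variety $X$ is $L$-isotypic. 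Running the reduction backwards, $A$ is $L$-isotypic, contradicting our assumption; this proves the proposition.

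The point where one must be a little careful — the ``main obstacle,'' though it is more a bookkeeping matter than a genuine difficulty — is the passage from $A$ to the $F$-simple factor $X$: one needs that $Z_L(A)$ is a number field \emph{iff} $Z_L(X)$ is, and that this is compatible with the $F$-isogeny $A \sim X^r$. This is exactly the canonical-isomorphism-of-centers argument already spelled out in the proof of Lemma~\ref{newiso} (the isogeny induces an isomorphism $\End^0_L(A) \cong M_r(\End^0_L(X))$, hence $Z_L(A) \cong Z_L(X)$, \cite[pp.~189--190]{ZarhinLuminy}), so I would simply cite it rather than redo it. Once that reduction is in place, the heart of the matter is the trivial observation that a subring $\bbq[\pi^m]$ of a field $\bbq[\pi]$ is again a field — everything else is Honda--Tate bookkeeping. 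Note this argument genuinely uses finiteness of the ground field: over a general field there is no Frobenius generating the center, which is precisely why the earlier Example shows the analogous statement fails over number fields.
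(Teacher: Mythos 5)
Your proof is correct and is essentially the paper's own argument: both rest on Tate's theorem giving $Z_L(A)=\Q[\pi_{L,A}]\subseteq \Q[\pi_{F,A}]=Z_F(A)$ (the inclusion \eqref{ZFLX}) together with the characterization of isotypy by the center being a number field (Lemma \ref{newiso}). The only differences are your preliminary reduction to an $F$-simple factor and the contradiction framing, both harmless but unnecessary, since \eqref{ZFLX} and Lemma \ref{newiso} apply directly to $A$ itself.
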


\begin{proof}
Since $A$ is $F$-isotypic, $Z_F(\X)$ is a number field by Lemma \ref{newiso}.
Since $Z_F(\X)$ contains the $\Q$-(sub)algebra  $Z_L(\X)$, the latter is also a number field, which implies that $A$ is $L$-isotypic,
again by Lemma \ref{isotyprem}.
\end{proof}

\begin{defn}
Suppose $A$ is an abelian variety over a field $F$,
and $L$ is a finite extension of $F$.
We say that $X,Y \in I_A(F)$ are {\em $L$-connected} if $\Hom_L(X,Y) \ne 0$.  
\end{defn}

It follows from Lemma \ref{newiso2} that $L$-connectedness is an {equivalence relation}
 and $I_A(F)$ splits into a {\em disjoint} union of its maximal $L$-connected components. 
  
 \begin{defn}
Suppose $A$ is an abelian variety over a field $F$,
and $L$ is a finite extension of $F$.
Let $\mathbf{\pi}_0(I_A(F))$ denote the set of maximal $L$-connected components of 
$I_A(F)$.
\end{defn}

\begin{lem}
\label{oldRmk3.2ii}
Suppose $A$ is an abelian variety over a finite field $F$,
and $L$ is a finite extension of $F$.
Then:
\begin{enumerate}
\item [\rm{(i)}]
every $L$-isotypic component of $A$ is defined over $F$,
\item[\rm{(ii)}]
if $\#I_A(F) = \#I_A(L)$
then $I_A(F) = I_A(L)$.

\end{enumerate}
\end{lem}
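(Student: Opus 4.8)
The plan is to prove (i) first and then deduce (ii) from it. For (i), fix an $L$-isotypic component $Y$ of $A$; I want to show that the abelian subvariety $Y \subseteq A$ is stable under $\Gal(\bar F/F)$, hence defined over $F$. Since $A$ is defined over $F$, the Galois group $\Gal(\bar F/F)$ acts on the set of abelian subvarieties of $A$ (over $\bar F$), and since $L/F$ is finite the subgroup $\Gal(\bar F/L)$ has finite index; each $\sigma \in \Gal(\bar F/F)$ carries $L$-subvarieties to $\sigma(L)$-subvarieties, but $L/F$ being a finite extension of finite fields is Galois, so in fact $\sigma$ permutes the abelian subvarieties defined over $L$. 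The key point is that $\sigma$ sends an $L$-isotypic abelian subvariety to an $L$-isotypic abelian subvariety (being $L$-isogenous to a power of an $L$-simple one is a property preserved by the isomorphism $\sigma: A \isom \sigma(A) = A$ of varieties over $\bar F$ that is semilinear over $\sigma$), and it sends a maximal such subvariety to a maximal such subvariety. Therefore $\sigma$ permutes the finite set $I_A(L)$. Now I invoke Proposition \ref{isoFL}: each $Y \in I_A(L)$, being $L$-isotypic, determines a simple $L$-isogeny factor, and the point is that the various $Y \in I_A(L)$ are pairwise non-$L$-isogenous, so they are distinguished by their Frobenius; more efficiently, each $Y \in I_A(F)$ is $F$-isotypic, hence $L$-isotypic by Proposition \ref{isoFL}, so it is contained in a unique member of $I_A(L)$, and conversely every member of $I_A(L)$ contains, and is the $L$-isotypic hull of, the members of $I_A(F)$ it meets. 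Since $\Gal(\bar F/F)$ fixes every $F$-isotypic component setwise (those are defined over $F$ by definition), and the members of $I_A(L)$ are unions (up to isogeny) of $F$-isotypic components grouped by $L$-connectedness — i.e., $I_A(L)$ is in bijection with $\pi_0(I_A(F))$ — and $\Gal(\bar F/F)$ preserves $L$-connectedness because $L/F$ is Galois, each member of $I_A(L)$ is $\Gal(\bar F/F)$-stable and hence defined over $F$.

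More precisely, here is the cleanest route for (i). By Proposition \ref{isoFL} every $F$-isotypic component of $A$ is $L$-isotypic, so each $X \in I_A(F)$ is contained in a unique $Y_X \in I_A(L)$, and $Y_X$ is the $L$-isotypic hull of $X$; moreover $Y_X = Y_{X'}$ iff $X$ and $X'$ are $L$-connected. The natural isogeny $S: \prod_{X \in I_A(F)} X \to A$ is an $F$-isogeny (Poincaré reducibility, as in the proof of Lemma \ref{newiso}), and under it each $Y \in I_A(L)$ corresponds, up to $L$-isogeny, to $\prod_{X: Y_X = Y} X$; since each $X$ in this product is defined over $F$ and the $L$-connectedness relation on $I_A(F)$ is $\Gal(\bar F/F)$-invariant (as $L/F$ is a Galois extension of finite fields, so $\sigma(\Hom_L(X,X')) = \Hom_L(\sigma X, \sigma X') = \Hom_L(X,X')$ using that $\Gal(\bar F/F)$ fixes each $X \in I_A(F)$), the corresponding $L$-isotypic component $Y \subseteq A$ is carried to itself by every $\sigma \in \Gal(\bar F/F)$. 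Hence $Y$ is defined over $F$.

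For (ii): by (i), every member of $I_A(L)$ is defined over $F$, and each is $L$-isotypic, hence (being defined over $F$) a fortiori $F$-isotypic; so each $Y \in I_A(L)$ is contained in some member of $I_A(F)$. Combined with the containment from the other side — each $X \in I_A(F)$ lies in a unique $Y_X \in I_A(L)$ — we get maps $I_A(F) \to I_A(L)$, $X \mapsto Y_X$, and $I_A(L) \to I_A(F)$, $Y \mapsto$ (the $F$-isotypic component containing $Y$), whose composites are the identity wherever the cardinalities force injectivity; precisely, $X \mapsto Y_X$ is surjective onto $I_A(L)$ always, so if $\#I_A(F) = \#I_A(L)$ it is a bijection, whence each $Y \in I_A(L)$ contains exactly one $X \in I_A(F)$ and, being $F$-isotypic and containing $X$, satisfies $Y \subseteq X' $ for the component $X' \ni Y$; but then $X \subseteq Y \subseteq X'$ forces $X = X' = Y$ by maximality. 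Thus $I_A(F) = I_A(L)$.

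The main obstacle is step (i), and within it the precise bookkeeping that the passage from $I_A(F)$ to $I_A(L)$ amounts to collapsing $L$-connected components, together with checking that $\Gal(\bar F/F)$ respects $L$-connectedness — this is exactly where finiteness of the fields is used, since it guarantees $L/F$ is Galois so that conjugation by $\sigma$ preserves "defined over $L$" and hence $\Hom_L$-groups between the (already $F$-rational, hence $\sigma$-fixed) isotypic components. Once that invariance is in hand, (i) is immediate and (ii) is a counting argument.
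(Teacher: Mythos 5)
The gap is in your proof of (ii), at the step ``every member of $I_A(L)$ is defined over $F$, and each is $L$-isotypic, hence (being defined over $F$) a fortiori $F$-isotypic.'' The implication ``defined over $F$ and $L$-isotypic $\Rightarrow$ $F$-isotypic'' is false, and note that your step invokes only conclusion (i) together with $L$-isotypicity, never the hypothesis $\#I_A(F)=\#I_A(L)$, so if it were valid it would prove far too much. Concretely: let $E$ be an ordinary elliptic curve over $F=\F_p$ with Frobenius trace $a\neq 0$ and let $E'$ be its quadratic twist, with trace $-a$. By Tate's theorem $E$ and $E'$ are not $F$-isogenous, but over $L=\F_{p^2}$ both Frobenius traces equal $a^2-2p$, so $E$ and $E'$ are $L$-isogenous. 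Then $A=E\times E'$ is defined over $F$ and is its own unique $L$-isotypic component (it is $L$-isogenous to $E^2$), yet it is not $F$-isotypic: here $\#I_A(F)=2>1=\#I_A(L)$. The nontrivial true direction is the one in Proposition \ref{isoFL} ($F$-isotypic $\Rightarrow$ $L$-isotypic over finite fields); the direction you label ``a fortiori'' is exactly the collapsing phenomenon that the cardinality hypothesis of (ii) is there to rule out. Since the rest of your (ii) (placing $Y$ inside some $X'\in I_A(F)$ and squeezing $X\subseteq Y\subseteq X'$) depends on $Y$ being $F$-isotypic, the argument for (ii) as written does not go through.

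Part (i) is essentially fine and takes a genuinely different route from the paper: the paper realizes each $L$-isotypic component directly as $A_C=\sum_{X\in C}X$ for an $L$-connected class $C\subseteq I_A(F)$, which is visibly defined over $F$, whereas you argue by Galois descent, using that $\sigma$ permutes $I_A(L)$ and that $Y$ is the unique $L$-isotypic component containing the ($\sigma$-fixed) members of $I_A(F)$ inside it; both routes rely on the facts you assert rather than prove (every $Y\in I_A(L)$ contains some $X\in I_A(F)$, and the $L$-isotypic hull of such an $X$ is unique), which are fillable from Lemma \ref{newiso2} and maximality. To repair (ii) without the false step: under $\#I_A(F)=\#I_A(L)$ your map $X\mapsto Y_X$ is a bijection; since $A$ is $F$-isogenous to $\prod_{X\in I_A(F)}X$ and $L$-isogenous to $\prod_{Y\in I_A(L)}Y$, comparing dimensions gives $\sum_{X}\dim Y_X=\dim A=\sum_{X}\dim X$, and $X\subseteq Y_X$ then forces $\dim Y_X=\dim X$, hence $Y_X=X$ for every $X$, i.e., $I_A(F)=I_A(L)$. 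Alternatively, quote the paper's identification of $I_A(L)$ with the classes $A_C$: equal cardinalities force each class to be a singleton, and $A_{\{X\}}=X$.
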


\begin{proof}
By Proposition \ref{isoFL}, each $X \in I_A(F)$ is $L$-isotypic.  
The set 
$\mathbf{\pi}_0(I_A(F))$  
may be canonically identified with the   set $I_A(L)$ of $L$-isotypic components of $A$, namely, if $C \in \mathbf{\pi}_0(I_A(F))$ 
then $\prod_{X \in C}X$ is an $L$-isotypic abelian variety that is $F$-isogenous to the abelian subvariety
$A_{C}:=\sum_{X \in C}X$
of $A$. This implies that $A_C$ is also $L$-isotypic.
Since all $X\in C$ are defined over $F$, the abelian subvariety $A_C$ is also defined over $F$. It follows from Lemma \ref{newiso2} that $A_C$ is $L$-isotypic. 
If $C^{\prime}\in\mathbf{\pi}_0(I_A(F))$ and $C\neq C^{\prime}$, then $\Hom_L(A_C, A_{C^{\prime}})=0$.  Since
$I_A(F)$ is the disjoint union of all $C \in \mathbf{\pi}_0(I_A(F))$, we have
$A=\sum_{C \in \mathbf{\pi}_0(I_A(F))}A_C.$
This implies that each $A_C$ is an $L$-isotypic component of $A$,
and every $L$-component of $A$ is of the form $A_C$. In particular, 
every $L$-isotypic component of $A$ is  defined over $F$. 

It also follows that if $\#I_A(F) = \#I_A(L)$,
then each maximal $L$-connected component of $I_A(F)$ consists of a single element, i.e., 
each $X \in I_A(F)$ is $L$-isotypic. In other words, 
if $\#I_A(F) = \#I_A(L)$, then
every $F$-isotypic component of $A$ is also $L$-isotypic and vice versa.
\end{proof}

\begin{prop}[See Corollary 3.4 of \cite{serrelem}]
\label{serrelemma}
Suppose $n$ is an integer greater than $4$, $\O$ is an integral 
domain of characteristic zero such that no rational prime divisor of $n$ 
is a unit in $\O$, $\alpha \in \O$, $\alpha$ has finite multiplicative order, 
and $(\alpha - 1)^2 \in n\O$.
Then $\alpha = 1$.
\end{prop}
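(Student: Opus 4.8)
The plan is to show that, under the hypotheses, $\alpha$ must be a root of unity whose order is a power of a single prime dividing $n$, and then to force that order to be $1$ by combining ramification in cyclotomic fields with the hypothesis that no prime divisor of $n$ is a unit in $\O$. Since $\O$ has characteristic zero and $\alpha$ has finite order, $\alpha$ is a primitive $N$-th root of unity for some integer $N \ge 1$; I want $N=1$, so assume $N \ge 2$ for contradiction. The subring $R := \Z[\alpha]$ of $\O$ is isomorphic to the ring of integers $\Z[\zeta_N]$ of $\Q(\zeta_N)$ (the minimal polynomial of $\alpha$ over $\Q$ is the $N$-th cyclotomic polynomial, since $\alpha$ has exact order $N$); in particular $R$ is a Dedekind domain. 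As $n>4$, fix a prime $p \mid n$. Since $p$ is not a unit in $\O$, it lies in some maximal ideal $\mathfrak m$ of $\O$, and then $(\alpha-1)^2 \in n\O \subseteq \mathfrak m$ forces $\alpha-1 \in \mathfrak m$; hence $\mathfrak P := \mathfrak m \cap R$ is a maximal ideal of $R$ above $p$ with $\alpha \equiv 1 \pmod{\mathfrak P}$.

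The next step pins down $N$. Write $N = p^a M$ with $p \nmid M$, pick $s,t \in \Z$ with $sp^a + tM = 1$, and set $\zeta_{p^a} := \alpha^{tM}$ and $\zeta_M := \alpha^{sp^a}$, so that $\zeta_{p^a}$ is a primitive $p^a$-th root of unity, $\zeta_M$ is a primitive $M$-th root of unity, and $\alpha = \zeta_{p^a}\zeta_M$. Because $p$ is totally ramified in $\Q(\zeta_{p^a})$, the element $1-\zeta_{p^a}$ generates the unique prime of $\Z[\zeta_{p^a}]$ above $p$, namely $\mathfrak P \cap \Z[\zeta_{p^a}]$; hence $\zeta_{p^a} \equiv 1 \pmod{\mathfrak P}$, and therefore $\zeta_M \equiv 1 \pmod{\mathfrak P}$. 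On the other hand $x^M-1$ is separable modulo $p$ (as $p \nmid M$), so reduction modulo $\mathfrak P$ is injective on the group of $M$-th roots of unity in $R$; thus $\zeta_M = 1$, i.e.\ $M=1$ and $N = p^a$, with $a \ge 1$ since $N \ge 2$. Running this argument for every prime divisor of $n$: if $n$ had two distinct prime divisors, then $N$ would be a power of each, hence $N=1$, a contradiction; so $n = p^k$ for the single prime $p$, with $k \ge 1$.

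Finally I produce the contradiction inside $\O$. In $R \cong \Z[\zeta_{p^a}]$ the prime $\mathfrak p := (\alpha-1)R$ above $p$ satisfies $pR = \mathfrak p^{e}$ with $e := p^{a-1}(p-1)$, so comparing generators of these equal principal ideals gives $(\alpha-1)^{e} = pu$ for some $u \in R^{\times} \subseteq \O^{\times}$. Writing $(\alpha-1)^2 = n\gamma = p^k\gamma$ with $\gamma \in \O$ and raising to the $e$-th power yields $p^{2}u^{2} = p^{ke}\gamma^{e}$; cancelling $p^2$ (valid since $\O$ is a domain) leaves $p^{ke-2}\gamma^{e} = u^2 \in \O^{\times}$, so $p^{ke-2}$ is a unit in $\O$. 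A short check using $n = p^k > 4$---which forces $k \ge 3$ if $p=2$, $k \ge 2$ if $p=3$, and $k \ge 1$ if $p \ge 5$---together with $a \ge 1$ shows $ke \ge 3$, hence $ke-2 \ge 1$, so $p$ itself is a unit in $\O$. This contradicts the hypothesis, so $N = 1$ and $\alpha = 1$.

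I expect the delicate point to be the passage from $R$ to $\O$: the element $\gamma = (\alpha-1)^2/n$ lies in $\O$ but a priori not in the Dedekind ring $R$, so one cannot directly apply the $\mathfrak p$-adic valuation to it; the device above is to multiply through by the power $(\alpha-1)^{e}$ of an element of $R$ so that, after cancelling $p^2$, only a genuine rational-integer power $p^{ke-2}$ and units remain. The other thing to get exactly right is the inequality $ke \ge 3$, which is precisely where the bound $n>4$ enters; the cases $n=3$ and $n=4$ are genuine exceptions, as a primitive cube root of unity $\alpha$ satisfies $(\alpha-1)^2 \in 3\O$ and $\alpha=-1$ satisfies $(\alpha-1)^2 \in 4\O$.
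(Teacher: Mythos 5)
The paper offers no proof of this proposition---it is quoted verbatim from Corollary 3.4 of \cite{serrelem}---so there is nothing internal to compare against; your argument is a correct, self-contained proof, and it is essentially the standard Minkowski--Serre-type argument that underlies the cited result. The chain of steps checks out: $\alpha$ of finite order $N$ in characteristic zero generates $\Z[\alpha]\cong\Z[\zeta_N]$; for each prime $p\mid n$ the non-unit hypothesis produces a maximal ideal of $\O$ containing both $p$ and $\alpha-1$, and total ramification of $p$ in $\Q(\zeta_{p^a})$ plus injectivity of reduction on prime-to-$p$ roots of unity force $N$ to be a power of $p$, whence (if $N\ge 2$) $n$ itself is a power of a single prime $p$ and $N=p^a$ with $a\ge 1$; then $(\alpha-1)^{e}=pu$ with $e=p^{a-1}(p-1)$ and $u$ a unit, and raising $(\alpha-1)^2=p^k\gamma$ to the $e$-th power and cancelling $p^2$ in the domain $\O$ shows $p^{ke-2}\in\O^\times$, contradicting the hypothesis since the case check ($k\ge 3$ if $p=2$, $k\ge 2$ if $p=3$, $k\ge 1$ if $p\ge 5$, with $e\ge 1,2,4$ respectively) gives $ke\ge 3$. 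That case analysis is precisely where $n>4$ enters, and your exceptions at $n=3$ (a cube root of unity) and $n=4$ ($\alpha=-1$) correctly show the bound is sharp; the one genuinely delicate point---that $\gamma=(\alpha-1)^2/n$ lives only in $\O$, not in the Dedekind ring $\Z[\alpha]$, so one must raise to the $e$-th power and compare units rather than apply a valuation to $\gamma$---is handled properly.
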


The following proposition from \cite{silzar} 
can be viewed as a variation on the Theorem on p.~17-19 of
\cite{serre}, which says that an automorphism of an abelian variety that has finite
order, and is congruent to $1$ modulo some integer $n \ge 3$,
is the identity automorphism. Proposition \ref{mink} can be proved using
Proposition \ref{serrelemma}.

\begin{prop}[Theorem 4.1b of \cite{silzar}]
\label{mink}
Suppose $\Y$ is an abelian variety, $n$ and $r$ are
relatively prime positive integers, and $n$ is at least $5$ and is not divisible
by the characteristic of a field of definition for $\Y$.  Suppose $\alpha$ is an
element of $\End(\Y)\otimes_{\bbz}\bbz[1/r]$, $\alpha$ has finite multiplicative
order, 
$\widetilde{\Y}_n$ is a subgroup of $\Y_n$ 
on which $\alpha$ induces the identity map, and $(\alpha - 1)\Y_n \subseteq
\widetilde{\Y}_n$.  Then $\alpha =1$.
\end{prop}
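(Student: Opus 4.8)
Looking at Proposition \ref{mink}, I want to prove that under the stated hypotheses, $\alpha = 1$.

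\medskip

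The plan is to reduce this to the scalar-valued statement of Proposition \ref{serrelemma} by means of a faithful linear representation of the endomorphism algebra. First I would consider the ring $R = \End(\Y)\otimes_{\bbz}\bbz[1/r]$ and observe that $\alpha$ lies in it with finite multiplicative order. The key hypotheses are that $\alpha$ acts as the identity on the subgroup $\widetilde{\Y}_n \subseteq \Y_n$ and that $(\alpha-1)\Y_n \subseteq \widetilde{\Y}_n$; combining these two facts gives $(\alpha-1)^2\Y_n = (\alpha-1)\big((\alpha-1)\Y_n\big) \subseteq (\alpha-1)\widetilde{\Y}_n = 0$, since $\alpha$ is the identity on $\widetilde{\Y}_n$. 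In other words $(\alpha-1)^2$ kills all of $\Y_n$, i.e.\ $(\alpha-1)^2 \equiv 0$ on $n$-torsion.

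\medskip

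Next I would translate "$(\alpha-1)^2$ kills $\Y_n$" into a congruence in a suitable ring of integers. Since $n$ is prime to $r$ and not divisible by $\fchar$ of a field of definition for $\Y$, the Tate module $T_\ell(\Y)$ (or, more elementarily, $\Y_n$ as a free $\bbz/n\bbz$-module of rank $2\dim\Y$) gives a faithful representation. Passing to a simple factor, or working prime-by-prime in the primes dividing $n$, one lands in the following situation: $\alpha$ generates a finite-order element of an order $\O$ (an integral domain of characteristic zero, e.g.\ $\bbz[\alpha]$ inside $\End^0(\Y)$, or a component thereof) in which no rational prime divisor of $n$ is a unit — this last point uses that $n$ is prime to $r$ so that inverting $r$ does not invert any prime dividing $n$ — and the condition $(\alpha-1)^2\Y_n = 0$ forces $(\alpha-1)^2 \in n\O$. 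At this point Proposition \ref{serrelemma}, applied with $n \ge 5$, yields $\alpha = 1$ in that component, and since the representation on torsion points (over all primes dividing $n$, together with faithfulness of the action on $\Y_n$ for the relevant $n$) is faithful enough to detect $\alpha$, we conclude $\alpha = 1$ in $R$.

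\medskip

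The main obstacle is organizing the passage from the endomorphism algebra $\End^0(\Y)$ — which need not be commutative or a domain — to an honest integral domain $\O$ of characteristic zero where Proposition \ref{serrelemma} applies. The clean way is to restrict attention to $\bbz[\alpha]\otimes\bbz[1/r]$, decompose $\bbq[\alpha]$ into a product of number fields according to the irreducible factors of the minimal polynomial of the finite-order (hence semisimple) element $\alpha$, project onto each factor to get a domain $\O$, and check that the congruence $(\alpha-1)^2\in n\O$ survives each projection (here one must verify that the divisors of $n$ remain non-units after projecting and inverting $r$, which follows since $n$ is coprime to $r$ and $\alpha$ being a root of unity means $\bbz[\alpha]$ is integral over $\bbz$). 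Once each projection of $\alpha$ equals $1$ by Proposition \ref{serrelemma}, so does $\alpha$ itself. The verification that $(\alpha-1)^2$ annihilating $\Y_n$ implies $(\alpha-1)^2 \in n\cdot\bbz[\alpha]$ uses the faithfulness of the action of $\End(\Y)$ on the full Tate module together with the fact that $\Y_n = T(\Y)/nT(\Y)$ for the part of the Tate module away from the characteristic.
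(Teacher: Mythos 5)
Your overall strategy is the intended one: the paper does not reprove this statement (it is quoted as Theorem 4.1b of \cite{silzar}, with only the remark that it can be deduced from Proposition \ref{serrelemma}), and your reduction follows exactly that line. The skeleton is correct: the two hypotheses give $(\alpha-1)^2\Y_n=0$; this is converted into a congruence modulo $n$; then Proposition \ref{serrelemma} is applied to the finite-order element in suitable characteristic-zero domains, and semisimplicity of $\Q[\alpha]$ (the minimal polynomial divides some $x^N-1$) lets you conclude $\alpha=1$ from the factorwise conclusions.

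One step is stated imprecisely and needs a patch. From ``$(\alpha-1)^2$ kills $\Y_n$'' what you actually get, by clearing the prime-to-$n$ denominators $r^k$ and factoring through multiplication by $n$, is $(\alpha-1)^2\in n\bigl(\End(\Y)\otimes_{\Z}\Z[1/r]\bigr)$, \emph{not} $(\alpha-1)^2\in n\,\Z[\alpha]$ as you claim at the end: the element $(\alpha-1)^2/n$ lies in $\Q[\alpha]\cap\bigl(\End(\Y)\otimes_{\Z}\Z[1/r]\bigr)$, which may be strictly larger than $\Z[\alpha]\otimes_{\Z}\Z[1/r]$, so the congruence need not hold in the smaller order you propose to project. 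The fix is harmless: project each factor $K$ of $\Q[\alpha]$ and take $\O$ to be the image of $\Q[\alpha]\cap\bigl(\End(\Y)\otimes_{\Z}\Z[1/r]\bigr)$, or simply $\O_K[1/r]$ (every element of $\End(\Y)$ is integral over $\Z$, so all the relevant elements land there); since $\gcd(n,r)=1$, no prime divisor of $n$ becomes a unit, and Proposition \ref{serrelemma} applies factor by factor, after which injectivity of $\Q[\alpha]\hookrightarrow\prod K$ gives $\alpha=1$. For comparison, the closest argument actually written out in this paper (in the proof of Proposition \ref{commutes}) takes a slightly different route: it passes to $T_\ell$, uses Lemma 5.3 of \cite{semistab} to turn the matrix congruence into eigenvalue congruences $(\lambda-1)^2\in n\bar{\Z}$, applies Proposition \ref{serrelemma} to each eigenvalue, and concludes by semisimplicity. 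Your commutative decomposition of $\Q[\alpha]$ avoids Tate modules and eigenvalues; the eigenvalue route avoids having to worry about exactly which order the congruence lives in, which is precisely the point where your write-up slipped.
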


We now give some elementary lemmas concerning abelian varieties 
over finite fields, which we will make use of in later sections.

\begin{lem}
\label{defined}
If $\X$ and $\Y$ are abelian varieties over a finite
field $F$, and $f\colon \X\to \Y$ is a homomorphism, then $f$ is defined 
over $F$ if and only if 
$f\pi_{F,\X} = \pi_{F,\Y}f$.
\end{lem}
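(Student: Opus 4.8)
The plan is to combine Galois descent with the structure of $\Gal(F^s/F)$ for $F$ finite, together with a ``twisting formula'' expressing the Frobenius conjugate $f^\varphi$ in terms of the Frobenius endomorphisms $\pi_{F,\X}$ and $\pi_{F,\Y}$.

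\textbf{Step 1 (reduction to the Frobenius substitution).} By the convention recalled in \S\ref{notation}, $f$ is defined over $F$ if and only if $f^\sigma = f$ for all $\sigma \in \Gal(F^s/F)$. Since $f$ is automatically defined over some finite separable — hence, after enlarging, finite Galois — extension $L'/F$ (Corollary~1, p.~258 of \cite{chow}), the quantity $f^\sigma$ depends only on the image of $\sigma$ in the \emph{cyclic} group $\Gal(L'/F)$, which is topologically generated by the Frobenius substitution $\varphi \in \Gal(F^s/F)$, i.e.\ the automorphism with $\varphi(a) = a^{q}$ for all $a \in \bar F$, where $q := \#F$. Hence $f$ is defined over $F$ if and only if $f^\varphi = f$.

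\textbf{Step 2 (twisting formula).} I claim that $f^\varphi \circ \pi_{F,\X} = \pi_{F,\Y} \circ f$ in $\Hom(\X,\Y)$. It suffices to verify this on $\bar F$-points. The key observation is that the geometric Frobenius endomorphism $\pi_{F,\X}$ induces on $\X(\bar F)$ exactly the same self-map as the Galois element $\varphi$: writing $\X$ via equations over $F$ inside a projective space, both send a point with homogeneous coordinates $(a_i)$ to the point with coordinates $(a_i^{q})$; in particular $\varphi^{-1} \circ \pi_{F,\X} = \mathrm{id}$ on $\X(\bar F)$. Therefore, using $g^\sigma(y) = \sigma(g(\sigma^{-1}(y)))$ and the analogous description of $\pi_{F,\Y}$ on $\Y(\bar F)$, for every $x \in \X(\bar F)$
$$f^\varphi(\pi_{F,\X}(x)) = \varphi\bigl(f(\varphi^{-1}(\pi_{F,\X}(x)))\bigr) = \varphi(f(x)) = \pi_{F,\Y}(f(x)).$$
Since two homomorphisms of abelian varieties over $\bar F$ that agree on all $\bar F$-points are equal, the claimed identity holds.

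\textbf{Step 3 (conclusion).} If $f^\varphi = f$, then $f\pi_{F,\X} = f^\varphi\pi_{F,\X} = \pi_{F,\Y}f$ by Step 2. Conversely, if $f\pi_{F,\X} = \pi_{F,\Y}f$, then $(f - f^\varphi)\circ\pi_{F,\X} = f\pi_{F,\X} - \pi_{F,\Y}f = 0$, and since $\pi_{F,\X}$ is an isogeny (it is invertible in $\End^0_F(\X)$, as recalled above) this forces $f = f^\varphi$. Combining with Step 1 gives the lemma.

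\textbf{On the main difficulty.} No step is computationally heavy; the care needed lies entirely in the bookkeeping of Step 2 — correctly identifying the three incarnations of Frobenius (the Galois substitution $\varphi$, the endomorphism $\pi_{F,\X}$, and the $q$-power coordinate map) so that the twisting formula comes out with the maps on the correct sides — and in the observation in Step 1 that it suffices to test $\sigma = \varphi$, which relies on $f$ being a priori defined over a finite extension of $F$.
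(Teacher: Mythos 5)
Your proof is correct and follows essentially the same route as the paper's: reduce to checking invariance under the single Frobenius element of $\Gal(F^s/F)$, establish the identity $f^\varphi\pi_{F,\X}=\pi_{F,\Y}f$ from the fact that $\pi_{F,\X}$ and the Galois Frobenius act identically on $\bar F$-points, and conclude. The only difference is that you spell out details the paper leaves implicit (the verification of the twisting formula on points and the cancellation of the isogeny $\pi_{F,\X}$ in the converse direction), which is fine.
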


\begin{proof}
Let $\tau \in \Gal(F^s/F)$ denote the Frobenius element. 
By the definitions of $\tau$, $\pi_{F,\X}$, and $\pi_{F,\Y}$, we have 
$\tau(f)\pi_{F,\X} = \pi_{F,\Y}f$. 
Now $f$ is defined over $F$ if and only $\tau (f)
= f$.   Therefore,  $f$ is defined over $F$ if and only if 
$f\pi_{F,\X} = \pi_{F,\Y}f$.  
\end{proof}

\begin{lem}
\label{equal}
Suppose we have $\Phi(n)$ for some $n > 0$.  Then:
\begin{itemize}
\item [\rm{(a)}]  the elements $\varphi(\pi_{F,\Y})$ and $\pi_{F,\X}$ of
$\End^0_L(\X)$ are equal on  $\widetilde{\X}_n$, and
\item [\rm{(b)}]  the elements $\varphi^{-1}(\pi_{F,\X})$ and $\pi_{F,\Y}$ 
of $\End^0_L(\Y)$ are equal on  $\widetilde{\Y}_n$.
\end{itemize}
\end{lem}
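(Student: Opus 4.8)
The plan is to reduce everything to the identity $\tau(f)\pi_{F,\X}=\pi_{F,\Y}f$ that appears in the proof of Lemma~\ref{defined}, and to exploit the fact that, by hypothesis (f), the restriction of $f$ to $\widetilde{\X}_n$ is a Galois-equivariant isomorphism onto $\widetilde{\Y}_n$. First I would prove (a). By definition $\varphi(\pi_{F,\Y})=f^{-1}\pi_{F,\Y}f$ as an element of $\End^0_L(\X)$, so the assertion ``$\varphi(\pi_{F,\Y})$ and $\pi_{F,\X}$ agree on $\widetilde{\X}_n$'' is equivalent to ``$\pi_{F,\Y}f$ and $f\pi_{F,\X}$ agree on $\widetilde{\X}_n$'', after applying the isomorphism $f\colon\widetilde{\X}_n\isom\widetilde{\Y}_n$ on the target. (One must note that $\varphi(\pi_{F,\Y})$ makes sense on $\X_n$: the degree of $f$ is prime to $n$ by (c), so $f$ induces an automorphism of $\X_n$, hence $f^{-1}$ acts on $\X_n$, and both $\varphi(\pi_{F,\Y})$ and $\pi_{F,\X}$ preserve $\X_n$; actually $\pi_{F,\X}$ preserves $\widetilde{\X}_n$ since the latter is defined over $F$, and likewise $\pi_{F,\Y}$ preserves $\widetilde{\Y}_n$.) Now the identity $\tau(f)\pi_{F,\X}=\pi_{F,\Y}f$ holds as maps $\X\to\Y$; restrict both sides to $\widetilde{\X}_n$. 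On $\widetilde{\X}_n$ we have $\tau(f)=f$ because, by (f), the isomorphism $f\colon\widetilde{\X}_n\to\widetilde{\Y}_n$ is defined over $F$, i.e.\ is Galois-equivariant, so in particular $\tau$-equivariant, which is precisely the statement $\tau(f)|_{\widetilde{\X}_n}=f|_{\widetilde{\X}_n}$. Therefore $f\pi_{F,\X}$ and $\pi_{F,\Y}f$ agree on $\widetilde{\X}_n$, and composing with $f^{-1}$ on the target gives that $\pi_{F,\X}$ and $f^{-1}\pi_{F,\Y}f=\varphi(\pi_{F,\Y})$ agree on $\widetilde{\X}_n$. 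This proves (a).

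For (b), I would apply the same argument with the roles of source and target swapped, using that $f$ carries $\widetilde{\X}_n$ isomorphically onto $\widetilde{\Y}_n$, so that $f^{-1}\colon\widetilde{\Y}_n\to\widetilde{\X}_n$ is also Galois-equivariant. Concretely: from $f\pi_{F,\X}|_{\widetilde{\X}_n}=\pi_{F,\Y}f|_{\widetilde{\X}_n}$ obtained above, precompose with $f^{-1}\colon\widetilde{\Y}_n\to\widetilde{\X}_n$ to get, on $\widetilde{\Y}_n$, the equality $f\pi_{F,\X}f^{-1}=\pi_{F,\Y}$, i.e.\ $\varphi^{-1}(\pi_{F,\X})$ and $\pi_{F,\Y}$ agree on $\widetilde{\Y}_n$. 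Alternatively one simply observes that (b) is the image of (a) under the isomorphism $\varphi^{-1}$ together with conjugation by $f$, so no new work is needed.

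The only point that requires a little care — and the one I would flag as the ``main obstacle'', though it is minor — is making sure all the endomorphisms in question genuinely act on the finite groups $\widetilde{\X}_n$ and $\widetilde{\Y}_n$, since $\varphi(\pi_{F,\Y})$ a priori lives only in $\End^0_L(\X)$, not in $\End_L(\X)$. This is handled by (c): since $\deg f$ is prime to $n$, the endomorphism $f$ of $\X$ restricts to an automorphism of the $n$-torsion, so $f^{-1}$ is legitimately defined on $\X_n$; and $\pi_{F,\Y}$, $\pi_{F,\X}$ are honest endomorphisms, so $f^{-1}\pi_{F,\Y}f$ acts on $\X_n$ and in particular the statement ``agree on $\widetilde{\X}_n$'' is meaningful. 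One should also record that $\pi_{F,\X}(\widetilde{\X}_n)\subseteq\widetilde{\X}_n$ and $\pi_{F,\Y}(\widetilde{\Y}_n)\subseteq\widetilde{\Y}_n$, which follows from (e) and (f) (the subgroups are defined over $F$, hence stable under $\tau$, hence stable under $\pi_{F,\X}$ and $\pi_{F,\Y}$ respectively). Once these bookkeeping points are in place, the proof is the two-line computation above.
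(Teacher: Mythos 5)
Your proof is correct and follows essentially the same route as the paper: hypothesis (f) plus the Frobenius relation $\tau(f)\pi_{F,\X}=\pi_{F,\Y}f$ from Lemma~\ref{defined} give $f\pi_{F,\X}=\pi_{F,\Y}f$ on $\widetilde{\X}_n$, and composing with $f^{-1}$ on the target (resp.\ source) yields (a) (resp.\ (b)); your extra bookkeeping about $\deg f$ being prime to $n$ just makes explicit what the paper leaves implicit. (Only a trivial phrasing slip: $f$ induces an isomorphism $\X_n\isom\Y_n$, not an automorphism of $\X_n$.)
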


\begin{proof}
By hypothesis (f) of assumption $\Phi(n)$, we know that
the restriction of the isogeny $f$ to the subgroup $\widetilde{\X}_n$ is defined
over $F$.  As in Lemma \ref{defined}, this means that 
$f\pi_{F,\X} = \pi_{F,\Y}f$ on 
$\widetilde{\X}_n$.  Therefore, we have (a).  Since the restriction of $f$ to
$\widetilde{\X}_n$  is an isomorphism from $\widetilde{\X}_n$ onto
$\widetilde{\Y}_n$, we have (b).  
\end{proof}

\begin{lem}
\label{centralizer}
If $\X$ is an abelian variety over a finite field
$F$, and $L$ is a finite field extension of $F$, then the centralizer of 
$Z_F(\X)$ in $\End^0_L(\X)$ is $\End^0_F(\X)$. 
\end{lem}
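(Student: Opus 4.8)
The plan is to exploit the structure theory of endomorphism algebras of abelian varieties over finite fields, together with the key fact (Tate's theorem, quoted just before Lemma~\ref{equal}) that $Z_F(\X) = \bbq[\pi_{F,\X}]$ and $Z_L(\X) = \bbq[\pi_{L,\X}]$ with $\pi_{L,\X} = \pi_{F,\X}^m$. Set $D := \End^0_F(\X)$ and $E := \End^0_L(\X)$; these are semisimple $\bbq$-algebras with $D \subseteq E$, and $Z_F(\X)$ is the center of $D$. What we must show is that $C_E(Z_F(\X)) = D$. One inclusion is trivial: every element of $D$ commutes with the center of $D$, so $D \subseteq C_E(Z_F(\X))$. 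The content is the reverse inclusion.

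First I would reduce to the isotypic case. Writing $\X$ up to $F$-isogeny as a product $\prod_i \X_i^{n_i}$ of powers of pairwise non-$F$-isogenous $F$-simple abelian varieties $\X_i$, the algebra $D = \End^0_F(\X)$ decomposes as a product $\prod_i M_{n_i}(D_i)$ of matrix algebras over the division algebras $D_i = \End^0_F(\X_i)$, and $Z_F(\X) = \prod_i K_i$ where $K_i$ is the (number-field) center of $D_i$. Correspondingly, over $L$, each $\X_i$ may split further, but by Proposition~\ref{isoFL} each $\X_i$ remains $L$-isotypic, so $\End^0_L(\X_i) = M_{r_i}(D_i')$ for a division algebra $D_i'$, and—crucially—since $\X_i$ and $\X_j$ are conjugate-free and non-$F$-isogenous for $i \neq j$, they stay non-$L$-isogenous? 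No: this can fail. So instead I would work with the coarser decomposition into $L$-connected components: group the $F$-isotypic factors of $\X$ according to $L$-isogeny (Lemma~\ref{oldRmk3.2ii} and the surrounding discussion), so that $\End^0_L(\X) = \prod_C \End^0_L(\X_C)$, with each block central-simple-algebra-like over its own center, and the idempotents of $Z_F(\X)$ refine those of $Z_L(\X)$. Since both the centralizer and $D$ respect this block decomposition, it suffices to treat one block, i.e., to assume $\X$ is $F$-isotypic. Then $Z_F(\X) = K$ is a field, $D = M_n(D_0)$ over the division algebra $D_0$ with center $K$, and $E = \End^0_L(\X)$ contains $D$ with $K \subseteq E$.

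Now, in the isotypic case, the main step is a centralizer computation inside $E$. The element $\pi := \pi_{F,\X}$ generates $K$ over $\bbq$ (Tate), so $C_E(Z_F(\X)) = C_E(K) = C_E(\pi)$. Because $\pi \in D = \End^0_F(\X)$ and $\pi_{L,\X} = \pi^m$ lies in the center of $E$, the subalgebra $K = \bbq[\pi]$ is a field extension of the center $Z_L(\X) = \bbq[\pi^m]$ of $E$. The double centralizer theorem, applied to the semisimple subalgebra $K \subseteq E$ over the central simple situation—or more carefully, to each simple block of $E$ over $Z_L(\X)$—gives that $C_E(K)$ is a semisimple algebra with center $K$ and dimension $\dim_{Z_L(\X)} E / \dim_{Z_L(\X)} K$ over $Z_L(\X)$; what I actually want is the sharper statement that $C_E(K)$ has the same $\bbq$-dimension as $D$. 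I would pin this down by a Tate-module / $\ell$-adic argument: for a prime $\ell \nmid \deg f$ (or just any $\ell \neq \fchar F$), $\End^0_F(\X)\otimes\bbq_\ell$ is the commutant of $\pi$ acting on $V_\ell(\X)$ (Tate's theorem again), while $\End^0_L(\X)\otimes\bbq_\ell$ is the commutant of $\pi^m$; hence $(C_E(\pi))\otimes\bbq_\ell$ is the subalgebra of $\mathrm{End}(V_\ell(\X))$ commuting with both $\pi^m$ and $\pi$, which is just the commutant of $\pi$, namely $D\otimes\bbq_\ell$. Comparing $\bbq$-dimensions forces $C_E(\pi) = D$.

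The main obstacle I anticipate is the bookkeeping in passing from $F$ to $L$: $\X$ need not be $F$-isotypic, its $F$-isotypic components can fuse under $L$-isogeny, and one must check that the block decompositions of $Z_F(\X)$, of $Z_L(\X)$, and of $\End^0_L(\X)$ are all compatible so that the centralizer is computed block-by-block. The earlier lemmas (Lemma~\ref{newiso2}, Lemma~\ref{oldRmk3.2ii}, Proposition~\ref{isoFL}) are exactly designed to handle this, so the reduction should go through cleanly, and the genuinely mathematical input is then just the classical double-centralizer statement combined with Tate's description of $\End^0$ over finite fields as the commutant of Frobenius.
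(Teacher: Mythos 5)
Your argument is correct in its essential final step, but it takes a genuinely different and much heavier route than the paper. The paper's proof is a three-line Galois descent: for any $\beta\in\End^0_L(\X)$ one has $\sigma(\beta)\,\pi_{F,\X}=\pi_{F,\X}\,\beta$ where $\sigma\in\Gal(L/F)$ is the Frobenius element (the same observation as in Lemma~\ref{defined}); so if $\beta$ centralizes $Z_F(\X)$, hence commutes with $\pi_{F,\X}$, then $\sigma(\beta)=\beta$ because $\pi_{F,\X}$ is invertible in $\End^0$, i.e.\ $\beta\in\End^0_F(\X)$. No Tate modules, no dimension count, and no use of Tate's isogeny theorem is needed (only the elementary fact that $\pi_{F,\X}$ lies in $Z_F(\X)$). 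Your route instead invokes the full strength of Tate's theorem twice, identifying $\End^0_F(\X)\otimes\Q_\ell$ and $\End^0_L(\X)\otimes\Q_\ell$ with the commutants of $\pi_{F,\X}$ and $\pi_{F,\X}^m$ in $\End(V_\ell(\X))$, noting that centralizers commute with the flat base change $\Q\to\Q_\ell$, and comparing dimensions; that argument is valid and has the merit of exhibiting the lemma as a purely linear-algebraic consequence of Tate's description of endomorphism algebras, but it buys generality the lemma does not need at the cost of deep input.

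Two smaller remarks. First, your entire reduction apparatus (decomposition into $F$-isotypic blocks, $L$-connected components, and the double centralizer theorem) is unnecessary: the $\ell$-adic argument in your last step is uniform in $\X$ and never uses the isotypic hypothesis, so you should simply delete that material. Second, as written the reduction is also not quite coherent: passing to one maximal $L$-connected block only lets you assume $\X$ is $L$-isotypic, not $F$-isotypic (such a block is in general a product of several $F$-isotypic components), so the sentence ``it suffices to assume $\X$ is $F$-isotypic'' does not follow from the decomposition you chose; if you wanted that reduction you should instead use the central idempotents of $Z_F(\X)$ itself, which lie in the algebra being centralized and hence are respected by both $C_{\End^0_L(\X)}(Z_F(\X))$ and $\End^0_F(\X)$. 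Since the final argument makes all of this moot, the cleanest fix is to present only the last paragraph.
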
 

\begin{proof}
The centralizer of $Z_F(\X)$ in $\End^0_L(\X)$ contains
$\End^0_F(\X)$, since $Z_F(\X)$ is the center of $\End^0_F(\X)$.  
Let $\sigma \in \Gal(L/F)$
denote the Frobenius element.  If $\beta$ is in the centralizer of $Z_F(\X)$ 
in $\End^0_L(\X)$, then 
$\beta \pi_{F,\X} = \pi_{F,\X}\beta$.
But from the definitions
of $\sigma$ and $\pi_{F,\X}$ we have 
$\sigma(\beta)\pi_{F,\X} = \pi_{F,\X}\beta$. 
Therefore, $ \sigma(\beta) = \beta$, and so $\beta \in \End^0_F(\X)$.  
\end{proof}

\section{Fields of definition for isogenies}
\label{isogsect}

In \S\S\ref{isogsect}--\ref{sknsect} we determine fields
of definition for isogenies of abelian varieties, under
the hypothesis $\Phi(n)$ (see Definition \ref{Phindef}) and certain additional conditions. 
In Theorem \ref{equiv}  
and Proposition \ref{comdef} we give
conditions under which the given isogeny $f$ is defined over $F$. In
Proposition 4.1 we deal with an intermediate field of definition, between 
the fields $F$ and $L$, for the isogeny $f$
(and the restrictions of $f$ to $L$-isotypic components of $\X$), and use this
result to prove the others.

\begin{prop}
\label{commutes}
Suppose we have $\Phi(n)$ for some $n \ge 5$, $j$ is a divisor of $m$, 
$F'$ is the degree $j$ extension of $F$ in $L$, and
$\varphi$ is defined as in \eqref{varphidef}. 
Suppose either
\begin{itemize}
\item [\rm{(i)}]   $\X' = \X$, $\Y' = \Y$, $f' = f$, and 
$\varphi' = \varphi$, or
\item [\rm{(ii)}]   $\X'$ is an $L$-isotypic component of $\X$,
$\Y' = f(\X')$, $f' : \X' \to \Y'$ is the isogeny induced by $f$, and
$\varphi' : \End^0_L(\Y') \, \tilde{\rightarrow}\, \End^0_L(\X')$ is
defined by $\varphi'(u)=(f')^{-1}uf'$.
\end{itemize}
If $(\varphi')^{-1}(\pi^j_{F,\X'})$ commutes with $\pi^j_{F,\Y'}$,
then
$\varphi'(\pi^j_{F,\Y'}) = \pi^j_{F,\X'}$ and
$f'$ is defined over $F'$.
\end{prop}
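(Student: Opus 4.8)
\emph{Proof proposal.} Write $q=\#F$ and $N=m/j$, and set
$\gamma:=\varphi^{-1}(\pi^j_{F,\X})\cdot(\pi^j_{F,\Y})^{-1}=f\,\pi^j_{F,\X}\,f^{-1}\,\pi^{-j}_{F,\Y}\in\End^0_L(\Y)$.
Since $\deg f$ is prime to $n$ and $q$ is a power of $\fchar(F)$, the element $\gamma$ lies in $\End(\Y)\otimes_{\Z}\Z[1/r]$ with $r:=\fchar(F)\cdot\deg f$ relatively prime to $n$. As $f$ maps $\X_n$ isomorphically onto $\Y_n$ and $\X'_n$ onto $\Y'_n$, and $f$, $\pi_{F,\X}$, $\pi_{F,\Y}$ carry $\X'$ to $\Y'$ (resp.\ preserve $\X'$, $\Y'$), the endomorphism $\gamma$ preserves $\Y'$; in case (ii) its restriction $\gamma|_{\Y'}\in\End^0_L(\Y')$ equals $\gamma':=(\varphi')^{-1}(\pi^j_{F,\X'})(\pi^j_{F,\Y'})^{-1}=f'\pi^j_{F,\X'}(f')^{-1}\pi^{-j}_{F,\Y'}$, while in case (i) we simply put $\gamma'=\gamma$. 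The plan is to verify that $\gamma'$ satisfies the hypotheses of Proposition \ref{mink}; then $\gamma'=1$, i.e.\ $\varphi'(\pi^j_{F,\Y'})=\pi^j_{F,\X'}$, and since $\pi_{F',\X'}=\pi^j_{F,\X'}$ and $\pi_{F',\Y'}=\pi^j_{F,\Y'}$, Lemma \ref{defined} applied over the finite field $F'$---over which $\X'$ and $\Y'$ are defined, because in case (ii) $\X'$ and $\Y'=f(\X')$ are $L$-isotypic components of $\X$ and $\Y$, hence defined over $F$ by Lemma \ref{oldRmk3.2ii}(i)---gives that $f'$ is defined over $F'$.

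First I would establish three properties of the ambient $\gamma$, using only $\Phi(n)$. \emph{(a)} By Lemma \ref{equal}(b), $\varphi^{-1}(\pi_{F,\X})$ and $\pi_{F,\Y}$ agree on $\widetilde{\Y}_n$; both of these maps, and $\pi_{F,\Y}^{-1}$, preserve $\widetilde{\Y}_n$ (which is $F$-rational and hence stable under $\pi_{F,\Y}$, the action on torsion of the Frobenius of $\Gal(F^s/F)$), so passing to $j$-th powers shows $\gamma$ is the identity on $\widetilde{\Y}_n$. \emph{(b)} Because $\mu$ and $\lambda={}^tf\mu f$ are defined over $F$, properties (i) and (ii) of the $e_n$-pairing show that each of $\pi^j_{F,\Y}$ and $\varphi^{-1}(\pi^j_{F,\X})=f\pi^j_{F,\X}f^{-1}$ scales $e_{\mu,n}$ by the exponent $q^j$; hence $\gamma$ preserves $e_{\mu,n}$. \emph{(c)} Fix a maximal isotropic subgroup $W\subseteq\widetilde{\Y}_n$ of $(\Y_n,e_{\mu,n})$, which exists by hypothesis (e) of $\Phi(n)$. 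For $w\in W$ and $y\in\Y_n$, using $\gamma w=w$ and (b), $e_{\mu,n}(w,(\gamma-1)y)=e_{\mu,n}(\gamma w,\gamma y)\cdot e_{\mu,n}(w,y)^{-1}=1$, so $(\gamma-1)\Y_n\subseteq W^{\perp}=W$. Intersecting with the $\gamma$-stable group $\Y'_n$ gives $(\gamma'-1)\Y'_n=(\gamma-1)\Y'_n\subseteq W\cap\Y'_n$, and $\gamma'$ is the identity on $W\cap\Y'_n$.

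It remains to check that $\gamma'$ has finite multiplicative order, and this is where the commutativity hypothesis enters. Since $f'$ is defined over $L$, we have $\varphi'(\pi_{L,\Y'})=\pi_{L,\X'}$, i.e.\ $(\varphi')^{-1}(\pi^j_{F,\X'})^{N}=(\varphi')^{-1}(\pi^m_{F,\X'})=\pi^m_{F,\Y'}=(\pi^j_{F,\Y'})^{N}$; as $(\varphi')^{-1}(\pi^j_{F,\X'})$ commutes with $\pi^j_{F,\Y'}$ by hypothesis, this yields $(\gamma')^{N}=(\varphi')^{-1}(\pi^j_{F,\X'})^{N}\,(\pi^j_{F,\Y'})^{-N}=1$. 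Now Proposition \ref{mink}, applied to $\Y'$, to the relatively prime positive integers $n\ge5$ and $r$ (with $\fchar(F)\nmid n$, and $F$ a field of definition for $\Y'$), to the finite-order element $\gamma'\in\End(\Y')\otimes\Z[1/r]$, and to the subgroup $W\cap\Y'_n$, gives $\gamma'=1$, completing the argument as explained above.

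I expect the main obstacle to be case (ii): one must be sure that the object $\gamma'$ attached to the restricted isogeny $f'$ genuinely is the restriction of the ambient $\gamma$, so that the maximal-isotropic bound $(\gamma-1)\Y_n\subseteq W$ descends to $\Y'_n$, and one must confirm that $\X'$ and $\Y'$ are defined over $F$ so that Lemma \ref{defined} is available over $F'$; both points follow from $\Y'=f(\X')$, $f$ being an isogeny, and Lemma \ref{oldRmk3.2ii}(i). The pairing bookkeeping in step (b) is routine but is the technical heart, and one should track throughout the stability of $\widetilde{\Y}_n$, $W$, $W\cap\Y'_n$, and $\Y'_n$ under $\pi_{F,\Y}$, which is what allows passing to $j$-th powers in step (a).
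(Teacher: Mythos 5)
Your argument is correct, and its skeleton is the paper's: you build the same element ($\gamma$ is the paper's $\alpha=f\sigma(f)^{-1}=\varphi^{-1}(\pi_{F',\X})\pi_{F',\Y}^{-1}$, with $\sigma$ the Frobenius of $\Gal(L/F')$), show it is the identity on $\widetilde{\Y}_n$ via Lemma \ref{equal}(b), show $(\gamma-1)\Y_n$ lands inside $\widetilde{\Y}_n$, extract finite multiplicative order from the commutativity hypothesis exactly as the paper does, and finish with the Minkowski--Serre machinery and Lemma \ref{defined}. There are two genuine divergences. First, where the paper simply cites Lemma 2.2c of \cite{silzar} for $(\alpha-1)\Y_n\subseteq\widetilde{\Y}_n$, you reprove it by the pairing computation; this works, but your step $W^{\perp}=W$ silently uses that $e_{\mu,n}$ is alternating (not merely skew-symmetric, which suffices only for odd $n$) and that maximal isotropy is taken in all of $\Y_n$ even when $e_{\mu,n}$ is degenerate (nothing in $\Phi(n)$ makes $\deg\mu$ prime to $n$); this is precisely the content of the cited lemma, so either add the one-line alternating argument or just cite it. Second, and more substantively, in case (ii) the paper does not apply Proposition \ref{mink} to $\Y'$: it only records $(\alpha'-1)^2(\Y')_n=0$ and then runs an eigenvalue argument through $T_\ell$, Lemma 5.3 of \cite{semistab}, and Proposition \ref{serrelemma}. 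You instead feed the subgroup $W\cap(\Y')_n$ (on which $\gamma'$ is the identity and which absorbs $(\gamma'-1)(\Y')_n$, because $\gamma$ preserves $\Y'$) directly into Proposition \ref{mink}; since that proposition asks only for \emph{some} subgroup with these two properties, your route is legitimate and arguably more economical, at the price of the bookkeeping you do carry out: that $\gamma'$ really is the restriction of $\gamma$, that $\Y'=f(\X')$ is an $L$-isotypic component of $\Y$ and hence, with $\X'$, defined over $F$ by Lemma \ref{oldRmk3.2ii}(i) (so that $\pi_{F,\X'}$, $\pi_{F,\Y'}$ and Lemma \ref{defined} over $F'$ make sense), and that $\gamma'\in\End(\Y')\otimes_{\Z}\Z[1/r]$ with $r$ prime to $n$. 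Your derivation of $(\gamma')^{m/j}=1$ from $\varphi'(\pi_{L,\Y'})=\pi_{L,\X'}$ plus the commutation hypothesis coincides with the paper's, and the final passage from $\gamma'=1$ to $\varphi'(\pi^j_{F,\Y'})=\pi^j_{F,\X'}$ and to $f'$ being defined over $F'$ is the same.
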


\begin{proof}
We have 
$\pi^j_{F,Z} = \pi_{F^{\prime},Z}$ 
for $Z = \X$, $\X'$, $\Y$, and $\Y'$.
Let $\sigma \in \Gal(L/F^{\prime})$ denote the Frobenius element, and
let 
$$\alpha = \varphi^{-1}(\pi_{F^{\prime},\X})\pi^{-1}_{F^{\prime},\Y} =
f\sigma(f)^{-1} \in \End^0_L(\Y).$$ 
Let 
$$\alpha' = 
(\varphi')^{-1}(\pi_{F^{\prime},\X'})\pi^{-1}_{F^{\prime},\Y'} =
f'\sigma(f')^{-1} \in \End^0_L(\Y').$$
Since $(\varphi')^{-1}(\pi_{F^{\prime},\X'})$ commutes with 
$\pi_{F^{\prime},\Y'}$, and $f'$ is defined over $L$, we have
$$(\alpha')^{m/j} = (\varphi')^{-1}(\pi_{L,\X'})\pi^{-1}_{L,\Y'} = 1.$$
Since the isogeny $f$ has degree relatively prime to $n$,
so does the isogeny $\sigma(f)$.  Therefore, 
$\alpha \in \End(\Y)\otimes_{\bbz}\bbz[1/r]$ 
for some positive integer $r$ relatively prime
to $n$.  By Lemma~2.2c of \cite{silzar}, we have 
$(\alpha - 1)\Y_n \subseteq \widetilde{\Y}_n$.  
By Lemma \ref{equal}b above, $\alpha  = 1$ on $\widetilde{\Y}_n$.  
Therefore, 
$(\alpha - 1)^2 \Y_n = 0$, 
and so 
$(\alpha' - 1)^2 (\Y')_n = 0$.
In case (i), Proposition \ref{mink} implies $\alpha  = 1$, as desired.  
Suppose $\ell$ is a prime divisor of $n$ and let 
$$T_\ell : \End^0(\Y') \to \End(V_\ell(\Y')) \cong M_{2g}(\Q_\ell)$$ 
be the natural map, where $g = \dim (\Y')$. Then $T_\ell(\alpha')$ is a
matrix of finite order, and 
$(T_\ell(\alpha') - 1)^2 \in nM_{2g}(\Z_\ell)$.
By Lemma 5.3 of \cite{semistab}, if $\lambda$ is an eigenvalue of $T_\ell(\alpha')$
then $(\lambda - 1)^2 \in n{\bar \Z}$, 
where ${\bar \Z}$ is the ring of
algebraic integers in an algebraic closure of $\Q$. By 
Proposition \ref{serrelemma}, $\lambda = 1$.
Therefore, $\alpha' = 1$. It follows that
$\varphi'(\pi_{F^{\prime},\Y'}) = \pi_{F^{\prime},\X'}$, 
and Lemma \ref{defined} 
implies $f'$ is defined over $F^{\prime}$.   
\end{proof}

\begin{prop}
\label{comdef}
Suppose we have $\Phi(n)$ for some $n \ge 5$.  Suppose hypothesis 
(i) or (ii) of Proposition \ref{commutes} holds.
If $(\varphi')^{-1}(\pi_{F,\X'})$ commutes with $\pi_{F,\Y'}$, then
$\varphi'(\pi_{F,\Y'}) = \pi_{F,\X'}$  and
$f'$ is defined over $F$.
\end{prop}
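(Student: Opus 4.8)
The plan is to deduce Proposition \ref{comdef} from Proposition \ref{commutes} by taking $j = 1$, so that $F' = F$. The only thing that needs checking is that the hypothesis of Proposition \ref{commutes} is met in this case, i.e. that when $(\varphi')^{-1}(\pi_{F,\X'})$ commutes with $\pi_{F,\Y'}$, then $(\varphi')^{-1}(\pi^1_{F,\X'})$ commutes with $\pi^1_{F,\Y'}$ — but these are literally the same statement since $\pi^1_{F,\X'} = \pi_{F,\X'}$ and $\pi^1_{F,\Y'} = \pi_{F,\Y'}$. So the conclusion of Proposition \ref{commutes} with $j=1$ gives directly that $\varphi'(\pi_{F,\Y'}) = \pi_{F,\X'}$ and that $f'$ is defined over $F' = F$.

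First I would observe that $j = 1$ is a legitimate choice: $1$ is a divisor of $m$, and the degree $1$ extension of $F$ inside $L$ is $F$ itself, so $F' = F$. With this choice all the starred quantities $\pi^j_{F,Z}$ appearing in Proposition \ref{commutes} collapse to the ordinary Frobenius endomorphisms $\pi_{F,Z}$ for $Z \in \{\X, \X', \Y, \Y'\}$. Next I would note that the remaining hypotheses of Proposition \ref{commutes} are exactly the hypotheses of Proposition \ref{comdef}: we are assuming $\Phi(n)$ for some $n \ge 5$, and we are assuming that either (i) or (ii) of Proposition \ref{commutes} holds (this is stated verbatim in Proposition \ref{comdef}), and finally the commutativity hypothesis ``$(\varphi')^{-1}(\pi_{F,\X'})$ commutes with $\pi_{F,\Y'}$'' matches ``$(\varphi')^{-1}(\pi^j_{F,\X'})$ commutes with $\pi^j_{F,\Y'}$'' once $j = 1$.

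Then I would simply invoke Proposition \ref{commutes} to conclude $\varphi'(\pi^1_{F,\Y'}) = \pi^1_{F,\X'}$, i.e. $\varphi'(\pi_{F,\Y'}) = \pi_{F,\X'}$, and that $f'$ is defined over $F' = F$. This completes the proof. I do not expect any real obstacle here: Proposition \ref{comdef} is the $j = 1$ specialization of Proposition \ref{commutes}, stated separately only because it is the form actually used downstream (in Theorem \ref{equiv} and elsewhere). The one point to state carefully, rather than a genuine difficulty, is the bookkeeping identification of $F'$ with $F$ and of $\pi^1$ with $\pi$; everything else is immediate from the already-proved Proposition \ref{commutes}.
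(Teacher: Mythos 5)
Your proposal is correct and is exactly the paper's own argument: the paper proves Proposition \ref{comdef} by citing Proposition \ref{commutes} with $j=1$, so that $F'=F$ and $\pi^j_{F,Z}=\pi_{F,Z}$. The extra bookkeeping you spell out is fine but not needed beyond the one-line reduction.
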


\begin{proof}
This follows directly from Proposition~\ref{commutes} with $j = 1$. 
\end{proof}

\begin{thm}
\label{equiv}
Suppose we have $\Phi(n)$ for some $n \ge 5$. 
Suppose hypothesis (i) or (ii) of Proposition \ref{commutes} holds.
Then the following are equivalent:
\begin{itemize}
\item [\rm{(a)}]  $f'$ is defined over $F$,
\item [\rm{(b)}]   $\varphi'(\pi_{F,\Y'}) = \pi_{F,\X'}$,
\item [\rm{(c)}]   $\varphi'(\End^0_F(\Y')) = \End^0_F(\X')$,
\item [\rm{(d)}]   $\varphi'(Z_F(\Y')) = Z_F(\X')$,
\item [\rm{(e)}]   $\varphi'(Z_F(\Y')) \subseteq Z_F(\X')$,
\item [\rm{(f)}]   $\varphi'(Z_F(\Y')) \supseteq Z_F(\X')$.
\end{itemize}
\end{thm}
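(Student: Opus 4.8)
The plan is to prove the chain of equivalences by a cycle of implications, using Proposition~\ref{comdef} as the engine that turns a commutation statement into a field-of-definition statement. First I would establish the easy implications. The implication (a)~$\Rightarrow$~(c) is immediate: if $f'$ is defined over $F$, then conjugation by $f'$ carries $F$-endomorphisms of $\Y'$ to $F$-endomorphisms of $\X'$, and since $(f')^{-1}$ is also defined over $F$ the map $\varphi'$ restricts to an isomorphism $\End^0_F(\Y') \isom \End^0_F(\X')$. Then (c)~$\Rightarrow$~(d) follows because an isomorphism of semisimple $\Q$-algebras carries center onto center, so $\varphi'$ maps $Z_F(\Y')$ isomorphically onto $Z_F(\X')$. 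The implications (d)~$\Rightarrow$~(e) and (d)~$\Rightarrow$~(f) are trivial. Also (a)~$\Rightarrow$~(b) is essentially Lemma~\ref{defined}: if $f'$ is defined over $F$ then $f'\pi_{F,\X'} = \pi_{F,\Y'}f'$, which rearranges to $\varphi'(\pi_{F,\Y'}) = (f')^{-1}\pi_{F,\Y'}f' = \pi_{F,\X'}$.

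Next I would close the loop from the weaker-looking conditions (e) and (f) back to (a). Here the key observation is that under $\Phi(n)$, for a finite field $F$ we have $Z_F(\Y') = \Q[\pi_{F,\Y'}]$ and $Z_F(\X') = \Q[\pi_{F,\X'}]$ by Tate's theorem (Theorem~2a of \cite{tate}, as recalled in the excerpt just before Proposition~\ref{isoFL}); one must first check that $\X'$ and $\Y'$ are indeed abelian varieties over $F$ in case (ii), which follows from Lemma~\ref{oldRmk3.2ii}(i) since $\X'$ is an $L$-isotypic component of $\X$, hence defined over $F$, and $\Y' = f(\X')$ is then defined over $L$ with $\pi_{F,\Y'}$ making sense as $\varphi'$ applied appropriately; more carefully, $\Y'$ is an $L$-isotypic component of $\Y$ and so likewise defined over $F$. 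Granting this, suppose (e) holds: then $\varphi'(\pi_{F,\Y'}) \in Z_F(\X')$, so it commutes with $\pi_{F,\X'}$; equivalently $(\varphi')^{-1}(\pi_{F,\X'})$ commutes with $\pi_{F,\Y'}$. Proposition~\ref{comdef} then yields that $f'$ is defined over $F$, i.e.\ (a). The case (f) is symmetric: apply the same argument with $f'$ replaced by the quasi-inverse $(f')^{-1}$ (or note that $(\varphi')^{-1}$ plays the role of $\varphi$ for the isogeny $\Y' \to \X'$), using that (f) says $\varphi'(\pi_{F,\Y'})$ (well, rather $(\varphi')^{-1}(\pi_{F,\X'}) \in Z_F(\Y')$) commutes with $\pi_{F,\Y'}$, again invoking Proposition~\ref{comdef}.

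It remains to route (b) into the loop, and this is cleanest as (b)~$\Rightarrow$~(a): if $\varphi'(\pi_{F,\Y'}) = \pi_{F,\X'}$, then in particular $(\varphi')^{-1}(\pi_{F,\X'}) = \pi_{F,\Y'}$ commutes with $\pi_{F,\Y'}$, so Proposition~\ref{comdef} applies and $f'$ is defined over $F$. Assembling the pieces: we have (a)~$\Rightarrow$~(b)~$\Rightarrow$~(a), (a)~$\Rightarrow$~(c)~$\Rightarrow$~(d)~$\Rightarrow$~(e)~$\Rightarrow$~(a), and (d)~$\Rightarrow$~(f)~$\Rightarrow$~(a), which together give the full equivalence of (a)--(f).

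The main obstacle is not any single implication but rather making sure the objects in case (ii) are legitimate over $F$ so that $Z_F(\Y')$, $\pi_{F,\X'}$, $\pi_{F,\Y'}$, and the identity $Z_F(\cdot) = \Q[\pi_{F,\cdot}]$ all make sense; this requires citing Lemma~\ref{oldRmk3.2ii}(i) to see that $L$-isotypic components are defined over $F$, and checking that the Frobenius $\pi_{F,\X'}$ of the subvariety is compatible with restriction of $\pi_{F,\X}$. The other subtlety is the symmetry argument for (f), where one should be careful that passing from $f'$ to its quasi-isogeny inverse preserves all the hypotheses of $\Phi(n)$ needed by Proposition~\ref{comdef} (degree prime to $n$, the torsion conditions (e), (f), (g) of $\Phi(n)$ with the roles of $\X$ and $\Y$ swapped and $\mu$, $\lambda$ interchanged); alternatively one avoids this entirely by observing that (f) directly gives the commutation hypothesis of Proposition~\ref{comdef} as stated, since $Z_F(\X') \subseteq \varphi'(Z_F(\Y'))$ forces $(\varphi')^{-1}(\pi_{F,\X'}) \in Z_F(\Y') = \Q[\pi_{F,\Y'}]$, which commutes with $\pi_{F,\Y'}$.
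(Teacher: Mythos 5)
Your proposal is correct and takes essentially the same route as the paper: the substantive implications (e)~$\Rightarrow$~(a) and (f)~$\Rightarrow$~(a) are handled exactly as in the paper's proof, by observing that the inclusion of centers forces the commutation hypothesis of Proposition~\ref{comdef}, with Lemma~\ref{defined} giving (a)~$\Leftrightarrow$~(b). The only (harmless) difference is how the cycle through (c) is closed: you prove (a)~$\Rightarrow$~(c) directly from $F$-rationality of $f'$ and its quasi-inverse, whereas the paper instead proves (d)~$\Rightarrow$~(c) via the centralizer statement of Lemma~\ref{centralizer}; your explicit check that in case (ii) the components $\X'$ and $\Y'=f(\X')$ are defined over $F$ (Lemma~\ref{oldRmk3.2ii}(i)) is a point the paper leaves implicit.
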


\begin{proof}
Lemma \ref{defined} gives the equivalence of (a) and (b).  The
implications
(b) $\Rightarrow$ (d) $\Rightarrow$ (e) and (c)
$\Rightarrow$ (d) $\Rightarrow$ (f) are straightforward.  

To show that (d) $\Rightarrow$ (c), take $\gamma \in \End^0_F(\Y')$ and $\beta
\in Z_F(\X')$.  Then (d) implies that $(\varphi')^{-1}(\beta) \in Z_F(\Y')$, and
therefore 
$(\varphi')^{-1}(\beta)\gamma = \gamma(\varphi')^{-1}(\beta)$.  
Thus,
$\beta\varphi'(\gamma) = \varphi'(\gamma)\beta$, 
and so $\varphi'(\gamma)$ is in the
centralizer in $\End^0_L(\X')$ of $Z_F(\X')$.  By 
Lemma~\ref{centralizer}, 
$\varphi'(\gamma) \in \End^0_F(\X')$.  
Similarly, the reverse inclusion, 
$$\End^0_F(\X') \subseteq \varphi'(\End^0_F(\Y')),$$ 
follows from the fact that the
centralizer in $\End^0_L(\Y')$ of $Z_F(\Y')$ is $\End^0_F(\Y')$.

To show that (f) $\Rightarrow$ (b), note that (f) implies that 
$$\pi_{F,\X'} \in Z_F(\X') \subseteq \varphi'(Z_F(\Y')),$$  
and therefore 
$(\varphi')^{-1}(\pi_{F,\X'}) \in Z_F(\Y')$, so
$(\varphi')^{-1}(\pi_{F,\X'})$ commutes with $\pi_{F,\Y'}$.  We thus
have (f) $\Rightarrow$ (b) from Proposition~\ref{comdef}.

Similarly, to show that (e) $\Rightarrow$ (b), note that (e) implies that
$$\varphi'(\pi_{F,\Y'}) \in \varphi'(Z_F(\Y')) \subseteq Z_F(\X').$$  Therefore, (e)
implies that $\varphi'(\pi_{F,\Y'})$ commutes with $\pi_{F,\X'}$, and so
$\pi_{F,\Y'}$ commutes with $(\varphi')^{-1}(\pi_{F,\X'})$.  We thus
have (e) $\Rightarrow$ (b) from Proposition~\ref{comdef}.  
\end{proof}

\begin{rem}
Under assumption (i) in Propositions \ref{commutes} and \ref{comdef}
and in Theorem \ref{equiv}, we can allow $n$ to equal $4$ if we assume
in addition that $\widetilde{\Y}_4 \cong (\Z/4\Z)^b$ for some $b$.
This is because Proposition \ref{mink} holds for $n = 4$ under the
assumption $\widetilde{\Y}_4 \cong (\Z/4\Z)^b$ (see Theorem 4.1c of
\cite{silzar}).
\end{rem}

\begin{ex}
We give an example to show that one cannot drop the hypothesis that
the ground field $F$ is finite.
Let $\X = \Y = E$ be an elliptic curve over $F = \R$ with complex multiplication,
and take $g\in\End_\C(E) - \End_\R(E)$, where $\C$ and $\R$ denote the fields of
complex and real numbers, respectively.
For each $n\in \Z_{>0}$ let 
$f= f'=1+ng \in \End_\C(E)$.
Then $\End^{0}_\R(E)=\Q$, 
$E(\R)$ contains a cyclic order $n$ subgroup 
$\tilde{E}_n = \tilde{\X}_n = \tilde{\Y}_n$, 
$f$ acts on $\tilde{E}_n$ as the identity map,
and $\varphi = \varphi'$ is the identity map. 
Here, conclusions (c--f) of Theorem \ref{equiv} hold while conclusion (a) does not.
\end{ex}

\begin{ex}
Next we give an example where Theorem \ref{equiv} applies,
and statements (a--f) all fail to hold (i.e., $f'$ is not defined over $F$).
Suppose $p$ is a prime, and either $p=5$ or $p\ge 11$.
Let $\X = \Y = E$ be a supersingular elliptic curve over $F = \F_p$
(see Chapter V of \cite{Silverman} for basic facts about  
elliptic curves over finite fields, especially supersingular ones). 
Then $\#E(\F_p)=p+1$. 
Writing $ E(\F_p) \cong \Z/a\Z \times \Z/ab\Z$ with
$a,b\in\Z_{>0}$, then $E[a] \subseteq E(\F_p) = \ker(\Frob - 1)$,
where $\Frob$ denotes the Frobenius endomorphism.
By Corollary III.4.11 of \cite{Silverman} there exists
$\lambda \in \End(E)$ such that $a\lambda = \Frob -1$.
Since the characteristic polynomial of $\Frob$ is $x^2+p$, it follows that
the characteristic polynomial of $\lambda$ is $x^2+{\frac{2}{a}}x + {\frac{1+p}{a^2}} \in \Z[x]$.
Thus $a=1$ or $2$, and if $p\equiv 1\pmod{4}$ then $a=1$.
Therefore either $E(\F_p)$ is cyclic (of order $p+1$), or
$p\equiv 3\pmod{4}$ and $E(\F_p)$ has a cyclic subgroup of order $(p+1)/2$.
Let $\tilde{E}_n$ be a cyclic subgroup of $E(\F_p)$ of order $n \ge 5$ (such
exist by the assumptions on $p$; further, $p \nmid n$).
Pick $g\in\End(E) - \End_{\F_p}(E)$ and let 
$$f= f'=1+ng \in \End(E) - \End_{\F_p}(E).$$
Then $f$ acts on $\tilde{E}_n$ as the identity map,
$L=\F_{p^2}$, and $m=2$.
By Theorem \ref{equiv}, 
$\varphi(Z_{\F_p}(E))$ and $Z_{\F_p}(E)$ must be different quadratic subfields
of the quaternion $\Q$-algebra $\End^0(E)$.
For example, restricting to the case where $E$ is $y^2=x^3+x$
and $p\equiv 3\pmod{4}$, then we can write 
$$Z_{\F_p}(E) = \Q + \Q j \cong \Q(\sqrt{-p})$$ and
$$\End^0(E) = \Q + \Q i + \Q j + \Q ij$$
with $i^2 = -1$, $j^2=-p$, and $ji=-ij$.
Taking $g=i$, so that $f=1+ni$, then 
$$f^{-1} = \frac{1-ni}{n^2+1}$$
and 
$$\varphi(Z_{\F_p}(E)) = \Q + \Q \varphi(j) \cong \Q(\sqrt{-p}).$$ 
Since 
$$\varphi(j) = f^{-1}jf = \frac{(1-n^2)j-2nij}{n^2+1},$$ it follows that
$Z_{\F_p}(E) \neq \varphi(Z_{\F_p}(E))$
inside $\End^0(E)$ whenever $n>0$.
\end{ex}

\begin{rem}
\label{commutative}
It follows  directly from Proposition \ref{comdef} that if we have 
$\Phi(n)$ for some $n \ge 5$, and $\End_L(\X)$ is commutative, then $f$ is 
defined over $F$. However, this result also follows directly from
Theorem 1.3 and Remark 1.4 (see also Remark 1.5) of \cite{silzar}, which 
imply that if we have $\Phi(n)$ for some $n \ge 5$ and $\End^0_L(\X) =
\End^0_F(\X)$ then $f$ is defined over $F$ (if $\End_L(\X)$ is commutative
and $L$ is finite,
then using \eqref{ZFLX} we have 
$\End^0_L(\X) = Z_L(\X) \subseteq Z_F(\X) \subseteq \End^0_F(\X)$). 
\end{rem}

\section{Applications of the Skolem-Noether Theorem}
\label{sknsect}

In Theorem 1.6 of \cite{silzar}
we showed that if we have $\Phi(n)$ for some $n$ that does not divide
$m^2$, then $\X$ and $\Y$ are $F$-isogenous. 
In Theorems \ref{skol} and \ref{mprime} below we give other conditions on $m$
under which (with certain additional conditions on the abelian varieties) 
$\X$ and $\Y$ are $F$-isogenous. 
First we state the Skolem-Noether~Theorem, which we will use in
Proposition \ref{deltaprop}. For a proof, see Theorem~3.62 on p.~69 of 
\cite{cr}.

\begin{thm}[Skolem-Noether~Theorem]
Suppose $K$ is a field and suppose $\N$
and $\N^{\prime}$ are
simple $K$-subalgebras of a central simple $K$-algebra $\M$.  Then every
isomorphism of $K$-algebras $g \colon�\N \to \N^{\prime}$ can be
extended to an inner automorphism of $\M$, that is, there exists 
an invertible element $a \in \M$ such that $g(b) = a^{-1}ba$ for every $b \in \N$.
\end{thm}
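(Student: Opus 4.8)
The plan is to run the standard module-theoretic argument underlying the Skolem--Noether Theorem. First I would invoke the Wedderburn--Artin structure theorem to write $\M = \End_D(W)$, where $D$ is a finite-dimensional division $K$-algebra (necessarily with center $K$, since $\M$ is central over $K$) and $W$ is a finite-dimensional right $D$-module on which $\M$ acts $K$-linearly on the left. The left $\M$-action on $W$ commutes with the right $D$-action, so $W$ is a left module over the $K$-algebra $\N \otimes_K D^{\mathrm{op}}$ via the inclusion $\N \hookrightarrow \M$; denote this module by $W_1$. Using instead the composite $\N \xrightarrow{g} \N^{\prime} \hookrightarrow \M$ to define the left $\N$-action yields a second left $\N \otimes_K D^{\mathrm{op}}$-module structure on the same underlying $K$-vector space; denote it by $W_2$.

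The crucial input is that $\N \otimes_K D^{\mathrm{op}}$ is a simple $K$-algebra. Indeed $\N$ is simple over $K$ and $D^{\mathrm{op}}$ is central simple over $K$, and the two-sided ideals of $\N \otimes_K D^{\mathrm{op}}$ are precisely $I \otimes_K D^{\mathrm{op}}$ for $I$ a two-sided ideal of $\N$; as $\N$ is simple it follows that $\N \otimes_K D^{\mathrm{op}}$ is simple, and being finite-dimensional it is simple Artinian. Over a simple Artinian ring there is a unique simple module up to isomorphism and every finitely generated module is a finite direct sum of copies of it, so two finitely generated modules are isomorphic if and only if they have the same dimension over $K$. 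Since $W_1$ and $W_2$ have the same underlying space, there is an isomorphism $\theta \colon W_1 \isom W_2$ of $\N \otimes_K D^{\mathrm{op}}$-modules.

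It remains to extract the conclusion. As an $\N \otimes_K D^{\mathrm{op}}$-module map, $\theta$ is in particular right-$D$-linear, hence $\theta \in \End_D(W) = \M$, and it is invertible there; its $\N$-equivariance from the structure $W_1$ to the structure $W_2$ reads $\theta(b w) = g(b)\, \theta(w)$ for all $b \in \N$, $w \in W$, i.e.\ $\theta\, b\, \theta^{-1} = g(b)$ inside $\M$. Setting $a = \theta^{-1}$ gives $g(b) = a^{-1} b a$ for all $b \in \N$, as required. The one step I expect to require care is the simplicity of $\N \otimes_K D^{\mathrm{op}}$, since this is exactly where one uses that $\N$ need only be simple (not central) over $K$ while $\M$, and therefore $D^{\mathrm{op}}$, is central over $K$; the remainder is a routine use of Wedderburn--Artin and of the uniqueness of the simple module over a simple Artinian ring. (In the split case $\M = M_n(K)$ one can bypass $D$ and argue directly with the two $\N$-module structures on $K^n$; and since the paper takes this theorem from \cite{cr}, in practice one would just cite it there.)
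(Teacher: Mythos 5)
Your proof is correct: the paper itself gives no proof of this theorem, citing Theorem 3.62 of Curtis--Reiner instead, and your argument (Wedderburn--Artin to write $\M=\End_D(W)$, simplicity of $\N\otimes_K D^{\mathrm{op}}$, uniqueness of the simple module over a simple Artinian ring, and the resulting intertwiner $\theta\in\M$ with $\theta b\theta^{-1}=g(b)$) is precisely the standard module-theoretic proof found in that reference. The only implicit point worth flagging is that ``central simple $K$-algebra'' is taken to be finite dimensional here, which is what makes the Artinian and dimension-count steps legitimate.
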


The remaining theorems will all make use of the following
result.

\begin{prop}
\label{deltaprop}
Suppose $\X$ and $\Y$ are abelian varieties defined over a finite field $F$,
$f : \X \to \Y$ is an isogeny defined over a field extension $L$ of $F$, and
for each of $\X$ and $\Y$ the number of $F$-isotypic components
is equal to the number of $L$-isotypic components.  If $\X'$ is an
$L$-isotypic component of $\X$, let $\Y' = f(\X')$, let $f' : \X' \to \Y'$ be 
the isogeny induced by $f$, and define an isomorphism
$\varphi'$ from $\End^0_L(\Y')$ onto $\End^0_L(\X')$
by $\varphi'(u)=(f')^{-1}uf'$.
Suppose that for every $L$-isotypic component $\X'$ of $\X$, there is an
isomorphism 
$\delta \colon Z_F(\Y') \, \tilde{\rightarrow} \, Z_F(\X')$
such that $\delta(\pi_{F,\Y'}) = \pi_{F,\X'}$ 
and such that $\delta = \varphi'$ on $Z_L(\Y')$. 
Then $\X$ and $\Y$ are $F$-isogenous.
\end{prop}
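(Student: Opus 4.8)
The plan is to reduce the statement to the level of isotypic components and then, on each component, to manufacture an $F$-isogeny out of $f'$ by a Skolem--Noether argument. For the reduction: by Lemma~\ref{oldRmk3.2ii}(ii) the hypotheses on the numbers of components give $I_\X(F)=I_\X(L)$ and $I_\Y(F)=I_\Y(L)$. Since $f$ is an isogeny it carries the $L$-isotypic decomposition of $\X$ onto that of $\Y$: for $\X'\in I_\X(L)$ the abelian subvariety $\Y'=f(\X')$ is $L$-isogenous to $\X'$, hence $L$-isotypic; distinct $\Y'$ are not $L$-connected (because the $\X'$ are not); and the $\Y'$ sum to $\Y$. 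Arguing exactly as in the proof of Lemma~\ref{oldRmk3.2ii}, this forces $\{f(\X'):\X'\in I_\X(L)\}=I_\Y(L)=I_\Y(F)$, so in particular each $\Y'$ is defined over $F$. As in the proof of Lemma~\ref{newiso}, the summation maps $\prod_{\X'\in I_\X(F)}\X'\to\X$ and $\prod_{\Y'\in I_\Y(F)}\Y'\to\Y$ are $F$-isogenies, so it is enough to show that $\X'$ is $F$-isogenous to $\Y'=f(\X')$ for every $L$-isotypic component $\X'$ of $\X$.

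Fix such an $\X'$, with its associated $\Y'$, $f'$, $\varphi'$, $\delta$. Since $\X'$ is $L$-isotypic, $\M:=\End^0_L(\X')$ is a central simple algebra over the number field $K:=Z_L(\X')$, and $\varphi'$, being an isomorphism of $\End^0_L(\Y')$ onto $\End^0_L(\X')$, carries $Z_L(\Y')$ onto $K$. Because $\X'$ and $\Y'$ are $F$-isotypic (they are $F$-isotypic components), $Z_F(\X')$ and $Z_F(\Y')$ are number fields by Lemma~\ref{newiso}, and by \eqref{ZFLX} they contain $Z_L(\X')$ and $Z_L(\Y')$, respectively. Hence $\N:=\varphi'(Z_F(\Y'))$ and $\N':=Z_F(\X')$ are simple $K$-subalgebras of $\M$, and $g:=\delta\circ(\varphi')^{-1}\colon\N\to\N'$ is a $\Q$-algebra isomorphism which, since $\delta=\varphi'$ on $Z_L(\Y')$, fixes $K$ pointwise, so it is a $K$-algebra isomorphism. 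By the Skolem--Noether Theorem there is an invertible $a\in\M$ with $a^{-1}\varphi'(u)a=\delta(u)$ for all $u\in Z_F(\Y')$; taking $u=\pi_{F,\Y'}$ and using $\delta(\pi_{F,\Y'})=\pi_{F,\X'}$ gives
\[
\varphi'(\pi_{F,\Y'})\,a = a\,\pi_{F,\X'}.
\]
After scaling $a$ by a positive integer we may take it to be an $L$-isogeny $\X'\to\X'$, so that $h:=f'a$ is a nonzero element of $\Hom_L(\X',\Y')$. Left-multiplying the displayed identity by $f'$ and using $f'\varphi'(\pi_{F,\Y'})=\pi_{F,\Y'}f'$ gives $h\,\pi_{F,\X'}=\pi_{F,\Y'}\,h$, so by Lemma~\ref{defined} the homomorphism $h$ is defined over $F$. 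Thus $\Hom_F(\X',\Y')\neq 0$; since $\X'$ and $\Y'$ are $F$-isotypic of the same dimension (being $L$-isogenous), Lemma~\ref{newiso2} shows their $F$-simple factors are $F$-isogenous and hence $\X'$ is $F$-isogenous to $\Y'$. Combining over all components, $\X$ is $F$-isogenous to $\prod_{\X'}\X'$, to $\prod_{\X'}f(\X')$, and hence to $\Y$.

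The component bookkeeping --- in particular the assertion that each $f(\X')$ is an $L$-isotypic component of $\Y$, hence defined over $F$ --- is routine from the lemmas of \S\ref{prelims}. The substantive step is the Skolem--Noether argument, and the point that needs care is checking that $\M$, $\N$, $\N'$ have exactly the shape the theorem requires: a central simple $K$-algebra containing two simple $K$-subalgebras related by a $K$-algebra isomorphism. This is precisely what the two hypotheses on $\delta$ deliver: $\delta=\varphi'$ on $Z_L(\Y')$ makes $g$ fix $K$ (so $g$ is a $K$-algebra map and the common base field $K$ is correctly located inside $\M$ via \eqref{ZFLX}), while $\delta(\pi_{F,\Y'})=\pi_{F,\X'}$ converts the Skolem--Noether conjugation relation into the Frobenius-commutation $h\pi_{F,\X'}=\pi_{F,\Y'}h$ needed to descend $h$ to $F$. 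I expect no further obstacle.
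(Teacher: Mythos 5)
Your proposal is correct and follows essentially the same route as the paper: the same reduction to $L$-isotypic components (which coincide with the $F$-isotypic ones by Lemma \ref{oldRmk3.2ii}), the same Skolem--Noether setup with $K=Z_L(\X')$, $\M=\End^0_L(\X')$, $\N=\varphi'(Z_F(\Y'))$, $\N'=Z_F(\X')$, and the same use of $\delta(\pi_{F,\Y'})=\pi_{F,\X'}$ together with Lemma \ref{defined} to descend $f'a$ (the paper's $f'u$) to $F$ before summing over components. The only differences are cosmetic: you conclude on each component via $\Hom_F(\X',\Y')\neq 0$ and Lemma \ref{newiso2} rather than observing that $f'a$ is itself an $F$-isogeny, and you spell out the bookkeeping (that the $f(\X')$ are the $L$-isotypic, hence $F$-defined, components of $\Y$) that the paper leaves implicit.
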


\begin{proof}
The hypothesis about isotypic components implies that every $L$-isotypic
component is also an $F$-isotypic component by
Lemma \ref{oldRmk3.2ii}. Let $\X'$ be an $L$-isotypic
component of $\X$. Then $Z_L(\X')$, $Z_F(\X')$, $Z_L(\Y')$, and $Z_F(\Y')$ are
number fields by Lemma \ref{newiso}. 
The map $\delta(\varphi')^{-1}$ defines an isomorphism from
$\varphi'(Z_F(\Y'))$ onto $Z_F(\X')$.  In the Skolem-Noether Theorem, take 
$$K = Z_L(\X'), \quad \M = \End^0_L(\X'), \quad \N = \varphi'(Z_F(\Y')),
\quad {\text{ and }} \quad  \N^{\prime} = Z_F(\X').$$ 
The algebra $\M$ is a central simple $K$-algebra, since by our assumption 
$\X'$ is $F$-isogenous to a power of an $F$-simple abelian variety.  
Further, $\N$ is a simple $K$-subalgebra of $\End^0_L(\X')$, 
since $\varphi'(Z_F(\Y'))$ is isomorphic to the field $Z_F(\Y')$, which,
because $L$ and $F$ are finite, 
by \eqref{ZFLX} 
contains 
the subfield $Z_L(\Y') \cong Z_L(\X') = K$. 
The Skolem-Noether Theorem shows the existence
(after multiplying, if necessary, by a sufficiently divisible integer) of 
an isogeny
$u \in \End_L(\X')$ such that 
$\delta(\varphi')^{-1}(b) = u^{-1}bu$ 
for every $b \in \varphi'(Z_F(\Y'))$.  Therefore, 
$$\delta(a) = u^{-1}\varphi'(a)u = u^{-1}(f')^{-1}af'u$$ 
for every $a \in Z_F(\Y')$. 
Thus, $f'u\delta(a) = af'u$  for every $a \in Z_F(\Y')$,
and in particular, 
$$f'u\pi_{F,\X'} = f'u\delta(\pi_{F,\Y'}) = \pi_{F,\Y'}f'u.$$  
By Lemma \ref{defined}, the isogeny $f'u$ is defined over the field $F$. 
Summing the isogenies $f'u$ corresponding to the different $L$-isotypic 
components of $\X$, we obtain an $F$-isogeny from the 
direct sum
of the 
$L$-isotypic 
components of $\X$ to the 
direct sum
 of the $L$-isotypic components of $\Y$. Since,
under our hypotheses, $\X$ and $\Y$ are each $F$-isogenous to the  
direct sums
of their $L$-isotypic components, we obtain the desired result. 
\end{proof}

If $\X$ 
is an $F$-isotypic abelian variety over a finite field, 
and $L$ is a finite extension of $F$,
then $\X$ is $L$-isotypic by Proposition \ref{isoFL}. 
So this is clearly a case where
the number of $F$-isotypic components of $\X$
is equal to the number of $L$-isotypic components of $\X$.

\begin{thm}
\label{skol}
Suppose we have $\Phi(n)$ for some $n \ge 5$, there is a $\Q$-algebra homomorphism
from $\bbq(\zeta_m)$ to $Z_L(\Y)$ that takes $1$ to $1$, and
for each of $\X$ and $\Y$ the number of $F$-isotypic components
is equal to the number of $L$-isotypic components.  
Then $\X$ and $\Y$ are $F$-isogenous.
\end{thm}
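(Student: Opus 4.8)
The plan is to deduce the theorem from Proposition~\ref{deltaprop}, whose hypothesis on numbers of isotypic components is exactly ours. So it suffices, for each $L$-isotypic component $\X'$ of $\X$ (with $\Y'=f(\X')$, induced isogeny $f'$, and isomorphism $\varphi'$ as in Proposition~\ref{deltaprop}), to produce an isomorphism $\delta\colon Z_F(\Y')\isom Z_F(\X')$ with $\delta(\pi_{F,\Y'})=\pi_{F,\X'}$ and $\delta=\varphi'$ on $Z_L(\Y')$. By Lemma~\ref{oldRmk3.2ii} the $L$-isotypic components of $\X$ and $\Y$ are also their $F$-isotypic components, so $\X'$ and $\Y'$ are $L$- and $F$-isotypic, the four fields $Z_L(\Y')\subseteq Z_F(\Y')$ and $Z_L(\X')\subseteq Z_F(\X')$ are number fields (Lemma~\ref{newiso}, together with \eqref{ZFLX}), and $\End^0_L(\X')$ is central simple over $Z_L(\X')$. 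Since $Z_L(\Y)$ is the direct sum of the fields $Z_L(\Y')$ over the $L$-isotypic components of $\Y$ (as in the proof of Lemma~\ref{newiso}), the given homomorphism $\bbq(\zeta_m)\to Z_L(\Y)$ restricts to an embedding of $\bbq(\zeta_m)$ into each $Z_L(\Y')$; in particular every $Z_L(\Y')$ contains a primitive $m$-th root of unity.

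Now fix such a component and set $\rho:=(\varphi')^{-1}(\pi_{F,\X'})\in\End^0_L(\Y')$ and $c:=\pi_{L,\Y'}\in Z_L(\Y')$. Because $f'$ is defined over $L$ we have $\varphi'(c)=\pi_{L,\X'}=\pi_{F,\X'}^m$, hence $\rho^m=c=\pi_{F,\Y'}^m$: both $\rho$ and $\pi_{F,\Y'}$ are $m$-th roots of the central element $c$. The commutative subfields $Z_L(\Y')[\rho]=\bbq[\rho]$ (which $\varphi'$ carries onto $\bbq[\pi_{F,\X'}]=Z_F(\X')$) and $Z_L(\Y')[\pi_{F,\Y'}]=Z_F(\Y')$ of $\End^0_L(\Y')$ are then splitting fields of $x^m-c$ over $Z_L(\Y')$, since a primitive $m$-th root of unity already lies in $Z_L(\Y')$; hence they are cyclic Galois extensions of $Z_L(\Y')$ of one and the same degree $d$, so $\rho^d,\pi_{F,\Y'}^d\in Z_L(\Y')$ and $\omega:=\rho^d\pi_{F,\Y'}^{-d}$ is an $(m/d)$-th root of unity in $Z_L(\Y')$. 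It suffices to prove $\omega=1$. Indeed, then $\rho$ and $\pi_{F,\Y'}$ are both roots of the irreducible polynomial $x^d-\pi_{F,\Y'}^d$, which is therefore the common minimal polynomial of both over $Z_L(\Y')$; applying $\varphi'$, the elements $\pi_{F,\X'}$ and $\varphi'(\pi_{F,\Y'})$ of $\End^0_L(\X')$ have the same minimal polynomial over $Z_L(\X')$, so by the Skolem--Noether Theorem there is an invertible $v\in\End^0_L(\X')$ with $v^{-1}\varphi'(\pi_{F,\Y'})v=\pi_{F,\X'}$, and $\delta(x):=v^{-1}\varphi'(x)v$ is the desired isomorphism (it agrees with $\varphi'$ on the central subfield $Z_L(\Y')$, sends $\pi_{F,\Y'}$ to $\pi_{F,\X'}$, and hence carries $Z_F(\Y')=\bbq[\pi_{F,\Y'}]$ isomorphically onto $Z_F(\X')$).

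It remains to prove $\omega=1$, the crux of the argument. Let $\alpha':=\rho\,\pi_{F,\Y'}^{-1}\in\End^0_L(\Y')$, exactly the element studied in the proof of Proposition~\ref{commutes} (case~(ii), with $j=1$); restricting to $\Y'$, Lemma~2.2c of \cite{silzar} and Lemma~\ref{equal}(b) give that, on $\Y'_n$, the image of $\alpha'-1$ lies in $\widetilde{\Y}_n\cap\Y'_n$ and $\alpha'$ is the identity on $\widetilde{\Y}_n\cap\Y'_n$, while $\alpha'\in\End(\Y')\otimes_{\bbz}\bbz[1/r]$ with $\gcd(r,n)=1$. From $\rho=\alpha'\pi_{F,\Y'}$ one computes
$$\omega=\rho^d\pi_{F,\Y'}^{-d}=\prod_{i=0}^{d-1}\bigl(\pi_{F,\Y'}^{\,i}\,\alpha'\,\pi_{F,\Y'}^{-i}\bigr),$$
a finite-order element of $Z_L(\Y')$ lying in $\End(\Y')\otimes_{\bbz}\bbz[1/r']$ for some $r'$ prime to $n$ (as $\deg\pi_{F,\Y'}$ is a power of $\fchar(F)$). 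The subgroup $\widetilde{\Y}_n\cap\Y'_n$ is defined over $F$, hence stable under $\pi_{F,\Y'}$; so for each $i$ the operator $N_i:=(\pi_{F,\Y'}^{\,i}\alpha'\pi_{F,\Y'}^{-i}-1)|_{\Y'_n}$ has image inside $\widetilde{\Y}_n\cap\Y'_n$ and kernel containing $\widetilde{\Y}_n\cap\Y'_n$. Therefore $N_iN_j=0$ for all $i,j$, so on $\Y'_n$ we get $\omega-1=\sum_iN_i$ with $(\omega-1)^2=0$, $(\omega-1)\Y'_n\subseteq\widetilde{\Y}_n\cap\Y'_n$, and $\omega=1$ on $\widetilde{\Y}_n\cap\Y'_n$. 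Since $\omega$ has finite multiplicative order and $n\ge 5$, Proposition~\ref{mink} yields $\omega=1$, and the theorem follows via Proposition~\ref{deltaprop}.

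I expect the step $\omega=1$ to be the main obstacle: one cannot simply invoke Proposition~\ref{comdef}, because $\alpha'$ itself need be neither $1$ nor of finite order. The point of the hypothesis on $\bbq(\zeta_m)$ is precisely that it makes the two subfields cyclic over $Z_L(\Y')$, so that the relevant ``norm down to $\End^0_F(\Y')$'' is the finite-order element $\omega$, to which Proposition~\ref{mink} applies.
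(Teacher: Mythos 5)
Your proposal is correct, and its skeleton coincides with the paper's: reduce to Proposition \ref{deltaprop}; use the embedding of $\Q(\zeta_m)$ into each $Z_L(\Y')$ to show $\pi_{F,\Y'}^d\in Z_L(\Y')$, where $d=[Z_F(\Y'):Z_L(\Y')]$ (the paper's $d=m/e$, proved there by an explicit irreducibility argument for $t^{m/e}-\Pi_{\Y'}$ rather than your Kummer/splitting-field uniqueness, but these are interchangeable); establish the key identity $\varphi'(\pi_{F,\Y'}^d)=\pi_{F,\X'}^d$ (your $\omega=1$); and then produce $\delta$. The genuine divergence is at the crux. The paper obtains $\omega=1$ simply by citing Proposition \ref{commutes} with $j=m/e=d$: since $\pi_{F,\Y'}^d$ lies in the center $Z_L(\Y')$ of $\End^0_L(\Y')$, the commutation hypothesis of that proposition is automatic, and its conclusion is exactly $\varphi'(\Pi_{\Y'})=\Pi_{\X'}$. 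You correctly observe that Proposition \ref{comdef} (the case $j=1$) does not suffice, but you overlooked that the already-proved intermediate case $j=d$ applies verbatim; instead you re-prove it by hand. Your hand argument is valid and is a pleasant alternative: writing $\omega$ as the ordered product of the conjugates $\pi_{F,\Y'}^i\alpha'\pi_{F,\Y'}^{-i}$, noting each $N_i$ has image in and kills $\widetilde{\Y}_n\cap\Y'_n$ (legitimate, since $\widetilde{\Y}_n$ and $\Y'$ are defined over $F$ and hence Frobenius-stable, and $\deg f'$ is prime to $n$), and applying Proposition \ref{mink} directly on the component; this avoids the route taken inside the paper's proof of Proposition \ref{commutes}, case (ii), through Tate modules, eigenvalues, Lemma 5.3 of \cite{semistab} and Proposition \ref{serrelemma}, at the cost of the product manipulation and stability bookkeeping, and it only needs the level-$F$ congruences rather than those over the intermediate field. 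Finally, your extra Skolem--Noether step to build $\delta$ is correct but redundant: once $\omega=1$, the elements $\pi_{F,\Y'}$ and $(\varphi')^{-1}(\pi_{F,\X'})$ share the minimal polynomial $t^d-\pi_{F,\Y'}^d$ over $Z_L(\Y')$, so $\delta$ can be defined directly via the quotient-ring isomorphisms as in the paper; Skolem--Noether is then applied once, inside Proposition \ref{deltaprop}.
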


\begin{proof}
Let $\X'$ be an $L$-isotypic
component of $\X$, let $\Y' = f(\X')$, let $f' : \X' \to \Y'$ be the isogeny
induced by $f$, and let 
$\varphi' : \End^0_L(\Y') \, \tilde{\rightarrow}\, \End^0_L(\X')$ be
defined by $\varphi'(u)=(f')^{-1}uf'$. 
Since $L$ and $F$ are finite fields, by \eqref{ZFLX} 
we have inclusions of number
fields
$$Z_L(\X') \subseteq Z_F(\X'), \quad \Q(\zeta_m) 
\subseteq Z_L(\Y') \subseteq Z_F(\Y').$$

Let $e$ be the largest divisor $j$ of $m$ such that $\pi_{L,\Y'}$ is a $j^{\rm th}$ 
power in the field $Z_L(\Y')$.  Let 
$\Pi_{\Y'} = \pi^{m/e}_{F,\Y'}$.  
Then 
$\Pi_{\Y'}^e = \pi^m_{F,\Y'} = \pi_{L,\Y'}$.  
Since some $e^{\rm th}$ root of $\pi_{L,\Y'}$ is in
$Z_L(\Y')$ by the definition of $e$, and 
$\bbq(\zeta_m) \subseteq Z_L(\Y')$, 
we know that 
$\Pi_{\Y'} \in Z_L(\Y')$.
 
We will now show that the minimal polynomial for $\pi_{F,\Y'}$ 
over $Z_L(\Y')$ is
$h(t) = t^{m/e} - \Pi_{\Y'}$.  It suffices to show that $h(t)$ is irreducible over
$Z_L(\Y')$.  Each root of $h(t)$ is the product of $\pi_{F,\Y'}$ by an $m^{\rm th}$
root of unity.  Suppose $g(t)$ is an irreducible monic factor of $h(t)$ over 
$Z_L(\Y')$. The constant term of $g(t)$ is an element of $Z_L(\Y')$ that 
is a product of roots of $h(t)$, 
so it is the product of $\pi^c_{F,\Y'}$ by an $m^{\rm th}$ root of unity,
where $c$ is the degree of the polynomial $g$.  Since $\bbq(\zeta_m) \subseteq
Z_L(\Y')$, we know that $\pi^c_{F,\Y'} \in Z_L(\Y')$.  Let $d$ be the greatest common
divisor of $c$ and $m$.  Then $\pi^d_{F,\Y'} \in Z_L(\Y')$.  But  
$(\pi^d_{F,\Y'})^{m/d} =
\pi_{L,\Y'}$, so by the definition of $e$, we now have $m/d \leq e$. 
Therefore, 
$m/e \leq d \leq c = \deg(g)$.  
Since $m/e$ is the degree of the
polynomial $h$, we have $h = g$ and $h$ is irreducible.

Let 
$\Pi_{\X'} = \pi_{F,\X'}^{m/e}$.  
The isomorphism $\varphi'$ induces an isomorphism of number
fields 
$\varphi' \colon  Z_L(\Y') \, \tilde{\rightarrow} \, Z_L(\X')$ 
such that 
$\varphi'(\pi_{L,\Y'}) = \pi_{L,\X'}$.  
Therefore $e$ is also the largest divisor $j$ of $m$ such that
$\pi_{L,\X'}$ is a $j^{\rm th}$ power in $Z_L(\X') \cong Z_L(\Y')$.
We have $\Pi_{\X'} \in Z_L(\X')$, and the above
reasoning shows that $t^{m/e} - \Pi_{\X'}$ is the minimal polynomial of 
$\pi_{F,\X'}$
over $Z_L(\X')$.

Since $\Pi_{\Y'} \in Z_L(\Y')$, we have 
$\varphi'(\Pi_{\Y'}) = \Pi_{\X'}$
by Proposition~\ref{commutes}. 
The composition of the natural isomorphisms
$$Z_F(\Y') = Z_L(\Y')(\pi_{F,\Y'}) \cong$$ $$Z_L(\Y')[t]/(t^{m/e} - \Pi_{\Y'}) \cong
Z_L(\X')[t]/(t^{m/e} - \Pi_{\X'})$$ $$\cong Z_L(\X')(\pi_{F,\X'})  = Z_F(\X'),$$
where the middle isomorphism is given by the map $\varphi'$, defines an
isomorphism $\delta \colon Z_F(\Y') \, \tilde{\rightarrow} \, Z_F(\X')$ 
such that 
$\delta(\pi_{F,\Y'}) = \pi_{F,\X'}$ 
and such that $\delta = \varphi'$ on $Z_L(\Y')$. We can now apply Proposition \ref{deltaprop}.
\end{proof}

\begin{thm}
\label{mprime}
Suppose we have $\Phi(n)$ for some $n \ge 5$, $m$ is prime,
and for each of $\X$ and $\Y$ the number of $F$-isotypic components
is equal to the number of $L$-isotypic components.  Suppose also that for
each $L$-isotypic component $\X'$ of $\X$, either $\Q(\zeta_m) 
\subseteq Z_L(\X')$, or $\Q(\zeta_m)$ and $Z_L(\X')$ are linearly disjoint over
$\Q$. 
Then $\X$ and $\Y$ are $F$-isogenous.
\end{thm}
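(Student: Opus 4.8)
The plan is to follow the proof of Theorem \ref{skol}. By Proposition \ref{deltaprop} (whose remaining hypotheses --- that $F$ is finite, $f$ an isogeny defined over $L$, and that $\X$ and $\Y$ each have as many $F$-isotypic components as $L$-isotypic components --- are part of $\Phi(n)$ and of our assumption), it suffices to produce, for every $L$-isotypic component $\X'$ of $\X$ (with $\Y'=f(\X')$, $f'\colon\X'\to\Y'$ the induced isogeny, and $\varphi'(u)=(f')^{-1}uf'$), an isomorphism $\delta\colon Z_F(\Y')\,\tilde{\rightarrow}\,Z_F(\X')$ with $\delta(\pi_{F,\Y'})=\pi_{F,\X'}$ and $\delta=\varphi'$ on $Z_L(\Y')$. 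Fix such an $\X'$ and put $K=Z_L(\X')$. As in Theorem \ref{skol}: $\X'$ and $\Y'$ are $F$-isotypic by Lemma \ref{oldRmk3.2ii}, so $Z_L(\X')$, $Z_F(\X')$, $Z_L(\Y')$, $Z_F(\Y')$ are number fields by Lemma \ref{newiso}; \eqref{ZFLX} gives $Z_L(\X')\subseteq Z_F(\X')$ and $Z_L(\Y')\subseteq Z_F(\Y')$; and $\varphi'$ restricts to an isomorphism of number fields $Z_L(\Y')\,\tilde{\rightarrow}\,K$ taking $\pi_{L,\Y'}$ to $\pi_{L,\X'}$.

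Since $m=[L:F]$ is prime and $\pi_{F,\X'}^m=\pi_{L,\X'}\in K$, the minimal polynomial $h_{\X'}(t)\in K[t]$ of $\pi_{F,\X'}$ over $K$ is a monic irreducible divisor of $t^m-\pi_{L,\X'}$; likewise the minimal polynomial $h_{\Y'}(t)\in Z_L(\Y')[t]$ of $\pi_{F,\Y'}$ over $Z_L(\Y')$ is a monic irreducible divisor of $t^m-\pi_{L,\Y'}$, so applying the ring isomorphism $\varphi'$ to coefficients yields a monic irreducible divisor $\varphi'(h_{\Y'})\in K[t]$ of $t^m-\pi_{L,\X'}$. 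The key point is that $\varphi'(h_{\Y'})=h_{\X'}$. Granting this, the composite of the canonical isomorphisms
\[
Z_F(\Y')=Z_L(\Y')[\pi_{F,\Y'}]\cong Z_L(\Y')[t]/(h_{\Y'}(t))\,\tilde{\rightarrow}\,K[t]/(h_{\X'}(t))\cong K[\pi_{F,\X'}]=Z_F(\X'),
\]
the middle arrow being $\varphi'$ on coefficients and $t\mapsto t$, is an isomorphism $\delta$ with $\delta(\pi_{F,\Y'})=\pi_{F,\X'}$ and $\delta=\varphi'$ on $Z_L(\Y')$, and Proposition \ref{deltaprop} finishes the proof.

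To prove $\varphi'(h_{\Y'})=h_{\X'}$ I would first dispose of the case $\pi_{F,\X'}\in K$, which is equivalent to $\pi_{F,\Y'}\in Z_L(\Y')$ (in either case a central Frobenius commutes with the transported one, so the commutation hypothesis of Proposition \ref{comdef} holds automatically): then that proposition gives $\varphi'(\pi_{F,\Y'})=\pi_{F,\X'}$ and that $f'$ is defined over $F$, whence $\varphi'(Z_F(\Y'))=Z_F(\X')$ by Theorem \ref{equiv}, and $\delta=\varphi'|_{Z_F(\Y')}$ works. So assume $\pi_{F,\X'}\notin K$, so $h_{\X'}$ and $h_{\Y'}$ are nonlinear. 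I then use the classical fact that, in characteristic zero and for prime $m$, $t^m-a$ is irreducible over a field precisely when $a$ is not an $m^{\rm th}$ power in it. If $\pi_{L,\X'}$ is not an $m^{\rm th}$ power in $K$, then (transporting along $\varphi'$) $\pi_{L,\Y'}$ is not an $m^{\rm th}$ power in $Z_L(\Y')$ either, both $t^m-\pi_{L,\X'}$ and $t^m-\pi_{L,\Y'}$ are irreducible, and $h_{\X'}=t^m-\pi_{L,\X'}=\varphi'(t^m-\pi_{L,\Y'})=\varphi'(h_{\Y'})$. If instead $\pi_{L,\X'}=\beta^m$ with $\beta\in K$, then $\zeta_m\notin K$ --- otherwise $t^m-\beta^m$ splits completely over $K$, forcing $\pi_{F,\X'}\in K$ --- so the hypothesis forces $\Q(\zeta_m)$ and $K$ to be linearly disjoint over $\Q$; hence $[K(\zeta_m):K]=m-1$ and $t^m-\beta^m$ factors in $K[t]$ as $(t-\beta)$ times an irreducible polynomial of degree $m-1$. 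Since $h_{\X'}$ is nonlinear it equals that degree-$(m-1)$ factor; the identical analysis on the $\Y'$ side (with $\gamma=(\varphi')^{-1}(\beta)$, so $\gamma^m=\pi_{L,\Y'}$) shows $h_{\Y'}$ is the degree-$(m-1)$ factor of $t^m-\gamma^m$ over $Z_L(\Y')$, and applying $\varphi'$ --- which fixes $\Q$ and sends $\gamma$ to $\beta$ --- carries it to $h_{\X'}$.

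I expect the main obstacle to be precisely this last step: correctly describing the factorization of $t^m-a$ over a number field when $m$ is prime, in each of the three regimes ($a$ not an $m^{\rm th}$ power; $a$ an $m^{\rm th}$ power with $\zeta_m$ in the field; $a$ an $m^{\rm th}$ power with $\Q(\zeta_m)$ linearly disjoint from the field), and keeping all the identifications along $\varphi'$ straight so that $\varphi'(h_{\Y'})$ and $h_{\X'}$ agree as polynomials and not merely up to abstract isomorphism. Everything else --- the reduction to Proposition \ref{deltaprop}, the number-field bookkeeping from \eqref{ZFLX} and Lemmas \ref{newiso} and \ref{oldRmk3.2ii}, and the shortcut when $\pi_{F,\X'}\in K$ via Proposition \ref{comdef} and Theorem \ref{equiv} --- is routine.
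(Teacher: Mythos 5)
Your proof is correct, and it follows the paper's skeleton (reduce to Proposition \ref{deltaprop}; build $\delta$ by matching the minimal polynomials of $\pi_{F,\X'}$ and $\pi_{F,\Y'}$ over the common center; use Proposition \ref{comdef} and Theorem \ref{equiv} when a Frobenius is already central, which is also what guarantees that $h_{\X'}$ and $h_{\Y'}$ are simultaneously linear or nonlinear) but replaces the paper's core technical step by a more elementary one. The paper branches on the hypothesis: when $\Q(\zeta_m)\subseteq Z$ it reruns the proof of Theorem \ref{skol}, and in the linearly disjoint case it examines the coefficient of $t^{d-1}$ of the minimal polynomial (a sum of $m$-th roots of unity times $\pi_{F,\X'}$) to conclude that either $g=t^m-\pi$ or $\pi_{F,\X'}\in Z(\zeta_m)$, then uses inflation--restriction and the vanishing of $H^1(\Gal(Z(\zeta_m)/Z),\bmu_m)$ (since $\gcd(m,m-1)=1$) to write $\pi=v^m$ with $v\in Z$, finishing with an automorphism of $Z(\zeta_m)/Z$ sending $\zeta_m^a$ to $\zeta_m^b$. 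You instead dichotomize on whether $\pi_{L,\X'}$ is an $m$-th power in $Z_L(\X')$: if not, the classical criterion for $t^m-a$ with $m$ prime gives irreducibility and $h_{\X'}=\varphi'(h_{\Y'})=t^m-\pi_{L,\X'}$ with no cyclotomic hypothesis at all; if so, $\zeta_m\notin K$ (else you are in the central case), linear disjointness is forced, and $t^m-\beta^m$ factors as $(t-\beta)$ times a single irreducible degree-$(m-1)$ polynomial, which must be $h_{\X'}$ and, transported by $\varphi'$, $h_{\Y'}$. This avoids the Kummer/cohomology machinery, treats both branches of the cyclotomic hypothesis uniformly (no separate appeal to the proof of Theorem \ref{skol}), and makes visible that the hypothesis is only needed when $\pi_{L,\X'}$ is an $m$-th power while $\pi_{F,\X'}\notin K$; the paper's argument, in exchange, is the one whose Kummer-theoretic engine parallels Theorem \ref{skol} and explains where the explicit description $\pi_{F,\X'}=v\zeta_m^a$, $\pi_{F,\Y'}=v\zeta_m^b$ comes from. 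Two cosmetic points: for $m=2$ your second sub-case is vacuous (as your own argument shows, since $\zeta_2\in K$ always), and the ``identical analysis on the $\Y'$ side'' is cleanest if phrased as applying $\varphi'$ to the irreducible factorization over $Z_L(\Y')$ and comparing with the one over $K$, so that no separate disjointness statement for $Z_L(\Y')$ is needed.
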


\begin{proof}
We will apply Proposition \ref{deltaprop}. Suppose $\X'$ is an $L$-isotypic
component of $\X$, and let $\Y' = f(\X')$. Let 
$Z = Z_L(\X')$ 
and identify $Z_L(\Y')$ with $Z$ via the isomorphism 
$\varphi' : Z_L(\Y') \, \tilde{\rightarrow}\,Z$ defined by
$\varphi'(u)=(f')^{-1}uf'$ where $f' : \X' \to \Y'$ is the isogeny
induced by $f$. It suffices to show there exists an isomorphism
$\delta \colon Z_F(\Y') \, \tilde{\rightarrow} \, Z_F(\X')$
over $Z$ such that $\delta(\pi_{F,\Y'}) = \pi_{F,\X'}$. 
If $\Q(\zeta_m)
\subseteq Z$, then the proof of Theorem \ref{skol} produces the desired
isomorphism $\delta$.

Suppose $\Q(\zeta_m)$ and $Z$ are linearly disjoint over $\Q$. 
We then have $\Gal(Z(\zeta_m)/Z)=\Gal(\Q(\zeta_m)/\Q)$.
Let $\pi = \pi_{L,\X'} \in Z$. 
Then
$\pi_{F,\X'}^m =  \pi = \pi_{L,\Y'} = \pi_{F,\Y'}^m$.
Let $g(t)$ denote the minimal polynomial for $\pi_{F,\X'}$ over $Z$, and let
$d$ be the degree of $g$. The coefficient of $t^{d-1}$ in $g(t)$ is an
element of $Z$ of the form 
$$\pi_{F,\X'}\sum_{j \in J} \zeta_m^j$$
with $J$ a subset of $\{0,1,\ldots,m-1\}$. If the sum is zero, then
$J = \{0,1,\ldots,m-1\}$ and $g(t) = t^m - \pi$,
so $t^m - \pi$ is an irreducible polynomial over $Z$ whose roots include
$\pi_{F,\X'}$ and $\pi_{F,\Y'}$, and therefore there is an isomorphism
$\delta$ as desired.
So we can suppose the sum is non-zero. Therefore,  
$\pi_{F,\X'} \in Z(\zeta_m)$.

Consider the inflation-restriction exact sequence (Kummer theory)
$$H^1(\Gal(Z(\zeta_m)/Z),\bmu_m) \to H^1(\Gal({\bar \Q}/Z),\bmu_m) 
\to H^1(\Gal({\bar \Q}/Z(\zeta_m)),\bmu_m).$$
Since $m$ is prime, we have
$\#\Gal(Z(\zeta_m)/Z) = m - 1$.
Since $\#\bmu_m = m$ and $\gcd(m,m-1) = 1$, we have
$$H^1(\Gal(Z(\zeta_m)/Z),\bmu_m) = 0.$$
The cohomological long exact sequence now tells us that the natural group homomorphism
$$Z^\times/(Z^\times)^m \hookrightarrow Z(\zeta_m)^\times/(Z(\zeta_m)^\times)^m$$
is an inclusion.
We have $\pi \in Z^\times$. Since $\pi_{F,\X'} \in Z(\zeta_m)^\times$
and $\pi_{F,\X'}^m = \pi$, we have $\pi \in (Z(\zeta_m)^\times)^m$.
Therefore, we can write $\pi = v^m$ for some $v \in Z^\times$. Then
$\pi_{F,\X'} = v\zeta_m^a$ and $\pi_{F,\Y'} = v\zeta_m^b$ with $a, b \in
\{0,1,\ldots,m-1\}$. If $a = 0$ then
$\pi_{F,\X'} \in Z \subseteq Z_F(\Y')$,
and by Theorem \ref{equiv} we have 
$Z_F(\Y') = Z_F(\X') = Z$,
and we are done. We proceed similarly if $b = 0$. Now suppose $a$ and $b$
are both non-zero. Then
$Z(\pi_{F,\X'}) = Z(\zeta_m) = Z(\pi_{F,\Y'})$,
and (since $\Q(\zeta_m)$ and $Z$ are linearly disjoint over $\Q$) there is
an automorphism of $Z(\zeta_m)$ over $Z$ that takes $\zeta_m^a$ to $\zeta_m^b$,
giving the desired isomorphism $\delta$.
\end{proof}

As an immediate corollary we have:

\begin{cor}
\label{mprimecor}
Suppose we have $\Phi(n)$ for some $n \ge 5$, 
for each of $\X$ and $\Y$ the number of $F$-isotypic components
is equal to the number of $L$-isotypic components, and $m \le 3$.
Then $\X$ and $\Y$ are $F$-isogenous.
\end{cor}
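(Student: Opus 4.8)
The plan is to reduce the statement directly to Theorem~\ref{mprime}. Since $m = [L:F]$ is a positive integer with $m \le 3$, we have $m \in \{1,2,3\}$, and I would split into cases. The case $m = 1$ is degenerate: then $L = F$, so $f$ is already defined over $F$ and $\X$, $\Y$ are $F$-isogenous with nothing to prove. For $m = 2$ and $m = 3$ the integer $m$ is prime, so Theorem~\ref{mprime} applies as soon as its remaining hypothesis is checked (the hypothesis equating the numbers of $F$- and $L$-isotypic components is already part of our assumptions).

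Thus the only thing to verify is that, for each $L$-isotypic component $\X'$ of $\X$, either $\Q(\zeta_m) \subseteq Z_L(\X')$ or $\Q(\zeta_m)$ and $Z_L(\X')$ are linearly disjoint over $\Q$. I would argue as follows. Recall first that $Z_L(\X')$ is a number field by Lemma~\ref{newiso}, since $\X'$ is $L$-isotypic. If $m = 2$ then $\zeta_2 = -1 \in \Q$, so $\Q(\zeta_m) = \Q \subseteq Z_L(\X')$ and the first alternative holds trivially. If $m = 3$ then $\Q(\zeta_3)$ is a quadratic extension of $\Q$; being of prime degree it has no field strictly between $\Q$ and itself, so $\Q(\zeta_3) \cap Z_L(\X')$ is either all of $\Q(\zeta_3)$ or just $\Q$. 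In the first case $\Q(\zeta_3) \subseteq Z_L(\X')$; in the second case, since $\Q(\zeta_3)/\Q$ is Galois, the equality $\Q(\zeta_3) \cap Z_L(\X') = \Q$ forces $\Q(\zeta_3)$ and $Z_L(\X')$ to be linearly disjoint over $\Q$. In every case the hypothesis of Theorem~\ref{mprime} is satisfied, and that theorem then gives the conclusion.

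I do not expect any real obstacle: all of the substance is already packed into Theorem~\ref{mprime} (and, behind it, Proposition~\ref{deltaprop} and Skolem--Noether). The only points needing a little care are the bookkeeping that $m = 1$ must be treated separately because $1$ is not prime, and the elementary observation that a quadratic field has no proper intermediate subfield, which is precisely what makes the linear-disjointness alternative of Theorem~\ref{mprime} automatic when $m \le 3$. It is worth noting that this last observation fails already for $m = 5$, since $\Q(\zeta_5)$ contains $\Q(\sqrt{5})$: a component $\X'$ with $\Q(\sqrt{5}) \subseteq Z_L(\X')$ but $\Q(\zeta_5) \not\subseteq Z_L(\X')$ would satisfy neither alternative, which is the reason the corollary is limited to $m \le 3$.
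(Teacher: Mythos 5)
Your proposal is correct and follows essentially the same route as the paper: reduce to Theorem~\ref{mprime} by observing that $\Q(\zeta_m)=\Q\subseteq Z_L(\X')$ when $m\le 2$, and that for $m=3$ the quadratic field $\Q(\zeta_3)$ is either contained in $Z_L(\X')$ or linearly disjoint from it over $\Q$. Your explicit separate treatment of $m=1$ (which is not prime, though the case is trivial since $L=F$) is a small point of care that the paper's one-line proof glosses over.
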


\begin{proof}
In the notation of Theorem \ref{mprime}, 
if $m\le 2$ then $\Q(\zeta_3)=\Q\subset Z$. If $m=3$, 
then $\Q(\zeta_3)/\Q$ is quadratic, so every number field (including $Z$) either contains (a subfield isomorphic to) $\Q(\zeta_3)$ or is linearly disjoint from it over $\Q$.
In both cases, the desired result follows from Theorem \ref{mprime}.
\end{proof}

\end{document}